\documentclass[a4paper,11pt]{article}

\usepackage[left=2.5cm,right=2.5cm,top=3cm,bottom=3cm,pdftex]{geometry}

\usepackage{amssymb}
\usepackage{amsmath}
\usepackage{amsthm}

\usepackage{dsfont}
\usepackage{url}
\usepackage{natbib}

\usepackage[utf8]{inputenc}
\usepackage[T1]{fontenc}

\usepackage[pdftex]{graphicx}
\pdfcompresslevel=9 
\DeclareGraphicsExtensions{.png, .pdf, .jpg} 

\usepackage[pdftex, colorlinks, linkcolor=blue, urlcolor=blue, citecolor=blue, breaklinks=true]{hyperref}

\newtheorem{thm}{Theorem}
\newtheorem{lem}{Lemma}
\newtheorem{fol}{Corollary}
\newtheorem{bem}{Remark}

\begin{document}

\begin{center}
\Large
\textbf{Equivalence theorems for compound design problems with application in mixed models}
\end{center}

\begin{center}
\textbf{Maryna Prus}
\end{center}

\begin{abstract}
In the present paper we consider design criteria which depend on several designs simultaneously. We formulate equivalence theorems based on moment matrices (if criteria depend on designs via moment matrices) or with respect to the designs themselves (for finite design regions). We apply the obtained optimality conditions to the multiple-group random coefficient regression models and illustrate the results by simple examples.
\end{abstract}

\textbf{Keywords:} Optimal design, optimality condition, multiple-group, mixed model, random coefficient regression, multi-response

\section{Introduction}

The subject of this work is compound design problems - optimization problems with optimality criteria depending on several designs simultaneously. Such optimality criteria can be, for example, commonly used design criteria for estimation of unknown model parameters in case when the covariance matrix of the estimation depends on several designs (see e.\,g. \cite{fedo}, \cite{schm1}). For such criteria general equivalence theorem proposed in \cite{kie} cannot be used directly. In \cite{fedo} optimal designs were obtained for specific regression functions. In \cite{schm1} particular group-wise identical designs have been discussed.

In this paper we formulate equivalence theorems for two kinds of compound design problems: 1) problems on finite experimental regions and 2) problems with optimality criteria depending on designs via moment (or information) matrices. For both cases we assume the optimality criteria to be convex and differentiable in the designs themselves or the moment matrices, respectively. In case 1) we formulate optimality conditions with respect to the designs directly (as proposed in \cite{whi} for one-design problems). These results can be useful in situations when design criteria cannot be presented as functions of moment matrices (see e.\,g. \cite{bos}). In case 2) optimality conditions are formulated with respect to the moment matrices. Therefore, no additional restrictions of the experimental regions are needed. 

We apply the equivalence theorems to multiple-group random coefficient regression (RCR) models. In these models observational units (individuals) are assigned to several groups. Within one group same designs (group-designs) for all individuals have been assumed. Group-designs for individuals from different groups are in general not the same.
Most of commonly used design criteria in multiple-group RCR models are functions of several group-designs. 
The particular case of these models with one observation per individual has been considered in \cite{gra}. In \cite{pru3}, ch.~6, models with group-specific mean parameters were briefly discussed. \cite{blu}, \cite{kun}, \cite{lem} and \cite{pru7} considered models with particular regression functions and specific covariance structure of random effects. In \cite{ent} and \cite{pru1} same design for all observational units have been assumed.

The paper has the following structure: Section~2 provides equivalence theorems for the compound design problems. In Section~3 we apply the obtained optimality conditions to the multiple-group RCR models. In Section~4 we illustrate the results by a simple example. The paper is concluded by a short discussion in Section~5.

\section{Optimality Conditions for Compound Design Problems}

We consider a compound design problem in which $\xi_1, \dots, \xi_s$ are probability measures (designs)  on experimental regions $\mathcal{X}_1, \dots, \mathcal{X}_s$, respectively, and
$\phi$ is a design criterion which depends on $\xi_1, \dots, \xi_s$ simultaneously and has to be minimized. $\Xi_i$ denotes the set of all designs on $\mathcal{X}_i$, $i=1, \dots, s$. For any $x_i \in \mathcal{X}_i$, $\delta_{x_i}$ denotes the particular design $\xi_i$ with all observations at point $x_i$.
For convenience we use the notation $\mbox{\boldmath{$\xi$}}=(\xi_1, \dots \xi_s)$ for a vector of designs $\xi_i \in \Xi_i$, $i=1, \dots, s$. Then $\mbox{\boldmath{$\xi$}}\in \Xi$ for $\Xi=\times_{i=1}^s\Xi_i$, where ''$\times$'' denotes the Cartesian product.


In Section~\ref{21} we consider compound design problems, where all design regions are assumed to be finite. We formulate an equivalence theorem (Theorem~\ref{oc2}) with respect to the designs directly.

In Section~\ref{22} we consider design criteria depending on designs via moment matrices and we propose an equivalence theorem (Theorem~\ref{oc1}) based on this structure. In this case no additional restrictions of the experimental regions are needed.

\subsection{Optimality conditions in case of finite design regions}\label{21}

In this section we restrict ourselves on optimization problems on finite design regions: $|\mathcal{X}_i|=k_i<\infty$ for all $i=1, \dots, s$. 
$\phi: \Xi \rightarrow (-\infty;\infty]$ denotes a design criterion.
We use the notation $\Phi(\mbox{\boldmath{$\xi$}},\tilde{\mbox{\boldmath{$\xi$}}})$ for the directional derivative of $\phi$ at $\mbox{\boldmath{$\xi$}}$ in direction of $\tilde{\mbox{\boldmath{$\xi$}}}$:
\begin{equation}\label{dd1}
\Phi(\mbox{\boldmath{$\xi$}},\tilde{\mbox{\boldmath{$\xi$}}})=\lim_{\alpha\, \searrow\, 0}\frac{1}{\alpha}\left(\phi((1-\alpha)\mbox{\boldmath{$\xi$}}+\alpha\tilde{\mbox{\boldmath{$\xi$}}})-\phi(\mbox{\boldmath{$\xi$}})\right).
\end{equation}
Further we define the partial directional derivative of $\phi$ at $\xi_i$ in direction of $\tilde{\xi}_i$ 
as follows:
\begin{equation}\label{pdd1}
\Phi_{\xi_{i'}, i'\neq i}(\xi_i,\tilde{\xi}_i)=\Phi(\mbox{\boldmath{$\xi$}},\breve{\mbox{\boldmath{$\xi$}}}),
\end{equation}
where $\breve{\mbox{\boldmath{$\xi$}}}=(\breve{\xi_1}, \dots \breve{\xi_s})$ with
 $\breve{\xi}_{i'}=\xi_{i'}$, $i'=1, \dots, s$, $i'\neq i$, and $\breve{\xi}_{i}=\tilde{\xi}_{i}$.

\begin{thm}\label{oc2}
Let $\phi: \Xi \rightarrow (-\infty;\infty]$ be convex and differentiable. 
\begin{enumerate}
	\item[a)] The following statements are equivalent:
	\begin{enumerate}
	\item[(i)] $\mbox{\boldmath{$\xi$}}^*=(\xi_1^*, \dots, \xi_s^*)$ minimizes $\phi(\mbox{\boldmath{$\xi$}})$
	\item[(ii)] $\sum_{i=1}^s\Phi_{\xi_{i'}^*, i'\neq i}(\xi_i^*,\xi_i)\geq 0,\,\, \forall\, \xi_i \in \Xi_i,\,  i=1, \dots, s$
	\item[(iii)] $\Phi_{\xi_{i'}^*, i'\neq i}(\xi_i^*,\xi_i)\geq 0,\,\, \forall\, \xi_i \in \Xi_i,\,  i=1, \dots, s$
	\item[(iv)] $\Phi_{\xi_{i'}^*, i'\neq i}(\xi_i^*,\delta_{x_{i}})\geq 0,\,\, \forall\, {x_{i}} \in \mathcal{X}_i,\,  i=1, \dots, s$.
\end{enumerate}
	\item[b)] Let $\mbox{\boldmath{$\xi$}}^*=(\xi_1^*, \dots, \xi_s^*)$ minimize $\phi(\mbox{\boldmath{$\xi$}})$. Let ${x_{i}}$ be a support point of $\xi_i^*$, $i \in \{1, \dots, s\}$. Then $\Phi_{\xi_{i'}^*, i'\neq i}(\xi_i^*,\delta_{x_{i}})= 0$.
	\item[c)] Let $\mbox{\boldmath{$\xi$}}^*=(\xi_1^*, \dots, \xi_s^*)$ minimize $\phi(\mbox{\boldmath{$\xi$}})$. Then the point $(\mbox{\boldmath{$\xi$}}^*,\mbox{\boldmath{$\xi$}}^*)$ is a saddle point of $\Phi$ in that 
\begin{equation}\label{sp11}
\Phi(\mbox{\boldmath{$\xi$}}^*,\mbox{\boldmath{$\xi$}})\geq 0= \Phi(\mbox{\boldmath{$\xi$}}^*,\mbox{\boldmath{$\xi$}}^*)\geq \Phi(\tilde{\mbox{\boldmath{$\xi$}}},\mbox{\boldmath{$\xi$}}^*), \quad \forall\, \mbox{\boldmath{$\xi$}}, \tilde{\mbox{\boldmath{$\xi$}}} \in \Xi
\end{equation}
and the point $(\xi_i^*,\xi_i^*)$ is a saddle point of $\Phi_{\xi_{i'}^*, i'\neq i}$ in that 
\begin{equation}\label{sp22}
\Phi_{\xi_{i'}^*, i'\neq i}(\xi_i^*,\xi_i)\geq 0= \Phi_{\xi_{i'}^*, i'\neq i}(\xi_i^*,\xi_i^*)\geq \Phi_{\xi_{i'}^*, i'\neq i}(\tilde{\xi}_i,\xi_i^*), \quad \forall\, \xi_i, \tilde{\xi}_i \in \Xi_i,
\end{equation}
for all $i=1, \dots, s$.
\end{enumerate}
\end{thm}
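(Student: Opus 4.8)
The plan is to exploit differentiability of $\phi$, which forces the directional derivative $\Phi(\boldsymbol{\xi},\tilde{\boldsymbol{\xi}})$ to be linear in the increment $\tilde{\boldsymbol{\xi}}-\boldsymbol{\xi}$. The first step is to establish the decomposition identity
\[
\Phi(\boldsymbol{\xi},\tilde{\boldsymbol{\xi}})=\sum_{i=1}^s\Phi_{\xi_{i'},\,i'\neq i}(\xi_i,\tilde{\xi}_i).
\]
Identifying each $\Xi_i$ with the probability simplex in $\mathbb{R}^{k_i}$, differentiability yields $\Phi(\boldsymbol{\xi},\tilde{\boldsymbol{\xi}})=\nabla\phi(\boldsymbol{\xi})^\top(\tilde{\boldsymbol{\xi}}-\boldsymbol{\xi})$, and since the perturbation $\breve{\boldsymbol{\xi}}-\boldsymbol{\xi}$ entering each partial derivative has only its $i$-th block nonzero, that partial derivative reduces to the block contribution $\nabla_i\phi(\boldsymbol{\xi})^\top(\tilde{\xi}_i-\xi_i)$; summing over $i$ reassembles the full inner product. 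This additivity, where differentiability is indispensable, drives the whole argument.

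For part a), the equivalence (i) $\Leftrightarrow$ (ii) is the standard first-order optimality characterization of a convex differentiable function on the convex product set $\Xi=\times_{i=1}^s\Xi_i$ (the one-design version is given in \cite{whi}), rewritten through the decomposition as $\Phi(\boldsymbol{\xi}^*,\boldsymbol{\xi})=\sum_i\Phi_{\xi_{i'}^*,\,i'\neq i}(\xi_i^*,\xi_i)\geq 0$. The implication (iii) $\Rightarrow$ (ii) is immediate, a sum of nonnegative terms being nonnegative. For (ii) $\Rightarrow$ (iii) I would fix an index $i$, take $\xi_{i'}=\xi_{i'}^*$ for all $i'\neq i$, and use $\Phi_{\xi_{i'}^*,\,i'\neq j}(\xi_j^*,\xi_j^*)=0$ for $j\neq i$ (there the direction coincides with the base point, so its increment vanishes), which collapses the sum in (ii) to the single surviving term $\Phi_{\xi_{i'}^*,\,i'\neq i}(\xi_i^*,\xi_i)$. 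For (iii) $\Leftrightarrow$ (iv), the forward direction is the special case $\xi_i=\delta_{x_i}$, whereas the converse again uses linearity: writing $\xi_i=\sum_{x_i}\xi_i(\{x_i\})\,\delta_{x_i}$ and $\sum_{x_i}\xi_i(\{x_i\})=1$ gives $\Phi_{\xi_{i'}^*,\,i'\neq i}(\xi_i^*,\xi_i)=\sum_{x_i}\xi_i(\{x_i\})\,\Phi_{\xi_{i'}^*,\,i'\neq i}(\xi_i^*,\delta_{x_i})$, a nonnegative combination of nonnegative terms.

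For part b), the same convex-combination expansion taken at $\xi_i=\xi_i^*$ gives $0=\Phi_{\xi_{i'}^*,\,i'\neq i}(\xi_i^*,\xi_i^*)=\sum_{x_i}\xi_i^*(\{x_i\})\,\Phi_{\xi_{i'}^*,\,i'\neq i}(\xi_i^*,\delta_{x_i})$; by a)(iv) every summand is nonnegative, so a strictly positive weight $\xi_i^*(\{x_i\})>0$ at a support point $x_i$ forces the corresponding term to vanish. For part c), in \eqref{sp11} the left inequality is the optimality condition (ii) and the middle equality is the base-point identity, while the right inequality follows from the gradient inequality for convex $\phi$, namely $\Phi(\tilde{\boldsymbol{\xi}},\boldsymbol{\xi}^*)=\nabla\phi(\tilde{\boldsymbol{\xi}})^\top(\boldsymbol{\xi}^*-\tilde{\boldsymbol{\xi}})\leq\phi(\boldsymbol{\xi}^*)-\phi(\tilde{\boldsymbol{\xi}})\leq 0$, the last step by optimality of $\boldsymbol{\xi}^*$. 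The partial saddle point \eqref{sp22} is proved identically, once one observes that $\psi_i(\xi_i):=\phi(\xi_1^*,\dots,\xi_i,\dots,\xi_s^*)$ is convex and differentiable with $\xi_i^*$ a minimizer---this is exactly (iii)---so the single-block version of the same convexity argument applies.

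The only genuine obstacle is the rigorous justification of the decomposition identity, which truly needs differentiability: for a merely convex criterion the directional derivative would be only sublinear in its second argument, the clean separation into partial derivatives would break down, and with it the equivalence (ii) $\Leftrightarrow$ (iii). Everything else is a careful but routine combination of the linearity of $\Phi$ in its second argument with the classical first-order conditions of convex optimization.
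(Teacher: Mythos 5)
Your proof is correct and takes essentially the same approach as the paper's: the same gradient-based decomposition of the directional derivative into partial directional derivatives on the product of simplices, the same substitution argument for (ii)$\Rightarrow$(iii), the same convex-combination expansion over one-point designs for (iv)$\Rightarrow$(iii) and part b), and the same convexity (gradient) inequality for the saddle-point statements in part c). The only differences are cosmetic: you argue (ii)$\Rightarrow$(iii) and part b) directly where the paper argues by contradiction, and you make explicit the restricted one-block function $\psi_i$ that the paper leaves implicit in its ``similarly'' step.
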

\begin{proof}
\begin{enumerate}
	\item[a)] 
	
\textbf{(i)$\Leftrightarrow$(ii)}: 

For this proof we present designs in form of row vectors $\xi_i=(w_{i1}, \dots, w_{ik_i})$, where $w_{it}\geq 0$ is the weight of observations at $x_{it}$, the $t$-th point of the experimental region $\mathcal{X}_i$, 
$t=1, \dots, k_i$, $\sum_{t=1}^{k_i}w_{it}=1$ (see also \cite{boy}, ch.~7). Then $\mbox{\boldmath{$\xi$}}=(\xi_1, \dots, \xi_s)$ is the full (row) vector of all weights of observations at all points of all experimental regions. 

We use the notations $\nabla_{\xi_i}\phi$ for the gradient of $\phi$ with respect to $\xi_i$: $\nabla_{\xi_i}\phi=\left(\frac{\partial\phi}{\partial w_{it}}\right)_{t=1, \dots, k_i}$, and $\nabla_{\xi}\phi$ for the gradient of $\phi$ with respect to $\mbox{\boldmath{$\xi$}}$, which means $\nabla_{\xi}\phi=(\nabla_{\xi_1}\phi, \dots, \nabla_{\xi_s}\phi)$. (Gradients $\nabla_{\xi_i}\phi$ and $\nabla_{\xi}\phi$ are row vectors).

According to convex optimization theory (see e.\,g. \cite{boy}, ch.~4) $\mbox{\boldmath{$\xi$}}^*$ minimizes $\phi$ iff $\Phi(\mbox{\boldmath{$\xi$}}^*,\mbox{\boldmath{$\xi$}})\geq 0$ for all $\mbox{\boldmath{$\xi$}} \in \Xi$. Then the equivalence of (i) and (ii) follows from
\begin{eqnarray*}
\Phi(\mbox{\boldmath{$\xi$}}^*, \mbox{\boldmath{$\xi$}})&=& \nabla_{\xi}\phi(\mbox{\boldmath{$\xi$}}^*)(\mbox{\boldmath{$\xi$}}-\mbox{\boldmath{$\xi$}}^*)^\top\\
&=& \sum_{i=1}^s\nabla_{\xi_i}\phi(\mbox{\boldmath{$\xi$}}^*)(\xi_i-\xi_i^*)^\top\\
&=& \sum_{i=1}^s\Phi_{\xi_{i'}^*, i'\neq i}(\xi_i^*, \xi_i).
\end{eqnarray*}

\textbf{(ii)$\Leftrightarrow$(iii)}: (iii)$\Rightarrow$(ii) Straightforward

(ii)$\Rightarrow$(iii): Let $\exists$ $\tilde{\xi}_{i_1} \in \mathcal{X}_i$ with $\Phi_{\xi_{i'}^*, i'\neq i}(\xi_{i_1}^*,\tilde{\xi}_{i_1})<0$. Let $\xi_{i_1}=\tilde{\xi}_{i_1}$ 
and $\xi_i=\xi_i^*$, $\forall\, i\neq i_1$. Then for all $i\neq i_1$ we have $\Phi_{\xi_{i'}^*, i'\neq i}(\xi_i^*,\xi_i)=\Phi_{\xi_{i'}^*, i'\neq i}(\xi_i^*,\xi_i^*)=0$, which results in
\begin{eqnarray*}
\sum_{i=1}^s\Phi_{\xi_{i'}^*, i'\neq i}(\xi_i^*,\xi_i)&=& \Phi_{\xi_{i'}^*, i'\neq i}(\xi_{i_1}^*,\tilde{\xi}_{i_1})+\sum_{i\in\{1, \dots, s\}\backslash i_1}\Phi_{\xi_{i'}^*, i'\neq i}(\xi_i^*,\xi_i^*)\\
&=& \Phi_{\xi_{i'}^*, i'\neq i}(\xi_{i_1}^*,\tilde{\xi}_{i_1})<0.
\end{eqnarray*}

\textbf{(iii)$\Leftrightarrow$(iv)}: (iii)$\Rightarrow$(iv) Straightforward

(iv)$\Rightarrow$(iii) 
Let $x_{i}=x_{it}$ be the $t$-th point in $\mathcal{X}_i$, $t = 1, \dots, k_i$.
Then the one-point design with all observations at $x_{i}$ is given by $\delta_{x_{it}}=\mathbf{e}_{t}$, where $\mathbf{e}_{t}$ is the $t$-th unit (row) vector of length $k_i$. A design $\xi_i$ can be written as $\xi_i=\sum_{t=1}^{k_i}w_{it}$. Then
the directional derivative of $\phi$ at $\xi_i^*$ in direction of $\xi_i$ can be presented in form
\begin{equation*}
\Phi_{\xi_{i'}^*, i'\neq i}(\xi_i^*,\xi_i)=\sum_{t=1}^{k_i}w_{it}\nabla_{\xi_i}\phi(\xi^*)(\mathbf{e}_{t}-\xi_i^*)^\top,
\end{equation*}
which results in
\begin{equation*}
\Phi_{\xi_{i'}^*, i'\neq i}(\xi_i^*,\xi_i)=\sum_{t=1}^{k_i}w_{it}\Phi_{\xi_{i'}^*, i'\neq i}(\xi_i^*,\delta_{x_{it}})\geq 0.
\end{equation*}


	\item[b)] Let the support point $x_{i}=x_{it'}$ be the $t'$-th point in $\mathcal{X}_i$, $t' \in 1, \dots, k_i$. Then for $\xi_i^*=(w_{i1}^*, \dots, w_{ik_i}^*)$ we have $w_{it'}^*>0$ and 
\begin{equation*}
\Phi_{\xi_{i'}^*, i'\neq i}(\xi_i^*,\xi_i^*)=\sum_{t=1}^{k_i}w_{it}^*\Phi_{\xi_{i'}^*, i'\neq i}(\xi_i^*,\delta_{x_{it}}).
\end{equation*}	
Let $\Phi_{\xi_{i'}^*, i'\neq i}(\xi_i^*,\delta_{x_{it'}})> 0$. Then since
$\Phi_{\xi_{i'}^*, i'\neq i}(\xi_i^*,\delta_{x_{it}})\geq 0$, $t=1, \dots, k_i$, we obtain $\Phi_{\xi_{i'}^*, i'\neq i}(\xi_i^*,\xi_i^*)>0$.
	
	\item[c)] The left-hand sides of both \eqref{sp11} and \eqref{sp22} are straightforward. From formula \eqref{dd1} and convexity of $\phi$ we obtain 
	\begin{equation*}
\Phi(\tilde{\mbox{\boldmath{$\xi$}}},\mbox{\boldmath{$\xi$}}^*)\leq \phi(\mbox{\boldmath{$\xi$}}^*)-\phi(\tilde{\mbox{\boldmath{$\xi$}}}),
\end{equation*}
which is non-positive for optimal $\mbox{\boldmath{$\xi$}}^*$ and all $\tilde{\mbox{\boldmath{$\xi$}}} \in \Xi$.
Similarly using formula \eqref{pdd1} we obtain the right-hand side of \eqref{sp22}.
\end{enumerate}
\end{proof}

\subsection{Optimality conditions based on moment matrices}\label{22}

We use the notation $\mathbf{M}_i(\xi_i)$ for a matrix which characterizes a design $\xi_i$. We assume $\mathbf{M}_i(\xi_i)$ to satisfy the condition
\begin{equation}\label{eq0}
\mathbf{M}_i(\xi_i)=\int_{\mathcal{X}_i}\mathbf{M}_i(\delta_{x_i})\xi_i(\textrm{d}x_i)
\end{equation}
for all $i=1, \dots, s$.
We call this matrix \textit{moment matrix} of a design $\xi_i$ (see e.\,g. \cite{puk}). 
$\mathcal{M}_i$ denotes the set of all moment matrices $\mathbf{M}_i(\xi_i)$, $\xi_i \in \Xi_i$. For $\mathbb{M}(\mbox{\boldmath{$\xi$}})=(\mathbf{M}_1(\xi_1), \dots, \mathbf{M}_s(\xi_s))$ , $\mathcal{M}$ denotes the set of all $\mathbb{M}(\mbox{\boldmath{$\xi$}})$, $\mbox{\boldmath{$\xi$}} \in \Xi$. Then $\mathcal{M}=\times_{i=1}^s\mathcal{M}_i$ and $\mathcal{M}$ is convex.
$\phi: \mathcal{M} \rightarrow (-\infty; \infty]$ is a design criterion.
$\Phi(\mathbb{M},\tilde{\mathbb{M}})$ denotes the directional derivative of $\phi$ at $\mathbb{M}$ in direction of $\tilde{\mathbb{M}}$:
\begin{equation}\label{dd}
\Phi(\mathbb{M},\tilde{\mathbb{M}})=\lim_{\alpha\, \searrow\, 0}\frac{1}{\alpha}\left(\phi((1-\alpha)\mathbb{M}+\alpha\tilde{\mathbb{M}})-\phi(\mathbb{M})\right).
\end{equation}
We define the partial directional derivative of $\phi$ at $\mathbf{M}_i$ in direction of $\tilde{\mathbf{M}}_i$ 
as follows:
\begin{equation}\label{pdd}
\Phi_{\mathbf{M}_{i'}, i'\neq i}(\mathbf{M}_i,\tilde{\mathbf{M}}_i)=\Phi(\mathbb{M},\breve{\mathbb{M}}),
\end{equation}
where $\breve{\mathbb{M}}_{i'}=\mathbb{M}_{i'}$, $i'=1, \dots, s$, $i'\neq i$, and $\breve{\mathbb{M}}_{i}=\tilde{\mathbb{M}}_{i}$.

\begin{thm}\label{oc1}
Let $\phi: \mathcal{M} \rightarrow (-\infty; \infty]$ be convex and differentiable.
\begin{enumerate}
	\item[a)] The following statements are equivalent:
	\begin{enumerate}
	\item[(i)] $\mbox{\boldmath{$\xi$}}^*=(\xi_1^*, \dots, \xi_s^*)$ minimizes $\phi(\mathbb{M}(\mbox{\boldmath{$\xi$}}))$
	\item[(ii)] $\sum_{i=1}^s\Phi_{\mathbf{M}_{i'}(\xi_{i'}^*), i'\neq i}(\mathbf{M}_i(\xi_i^*),\mathbf{M}_i(\xi_i))\geq 0,\,\, \forall\, \xi_i \in \Xi_i,\,  i=1, \dots, s$
	\item[(iii)] $\Phi_{\mathbf{M}_{i'}(\xi_{i'}^*), i'\neq i}(\mathbf{M}_i(\xi_i^*),\mathbf{M}_i(\xi_i))\geq 0,\,\, \forall\, \xi_i \in \Xi_i,\,  i=1, \dots, s$
	\item[(iv)] $\Phi_{\mathbf{M}_{i'}(\xi_{i'}^*), i'\neq i}(\mathbf{M}_i(\xi_i^*),\mathbf{M}_i(\delta_{x_i}))\geq 0,\,\, \forall\, x_i \in \mathcal{X}_i,\,  i=1, \dots, s$.
\end{enumerate}
	\item[b)] Let $\mbox{\boldmath{$\xi$}}^*=(\xi_1^*, \dots, \xi_s^*)$ minimize $\phi(\mathbb{M}(\mbox{\boldmath{$\xi$}}))$. Let $x_i$ be a support point of $\xi_i^*$, $i \in \{1, \dots, s\}$. Then $\Phi_{\mathbf{M}_{i'}(\xi_{i'}^*), i'\neq i}(\mathbf{M}_i(\xi_i^*),\mathbf{M}_i(\delta_{x_i}))= 0$.
	\item[c)] Let $\mbox{\boldmath{$\xi$}}^*=(\xi_1^*, \dots, \xi_s^*)$ minimize $\phi(\mathbb{M}(\mbox{\boldmath{$\xi$}}))$. Then the point $(\mathbb{M}(\mbox{\boldmath{$\xi$}}^*),\mathbb{M}(\mbox{\boldmath{$\xi$}}^*))$ is a saddle point of $\Phi$ in that 
\begin{equation}\label{sp1}
\Phi(\mathbb{M}(\mbox{\boldmath{$\xi$}}^*),\mathbb{M}(\mbox{\boldmath{$\xi$}}))\geq 0= \Phi(\mathbb{M}(\mbox{\boldmath{$\xi$}}^*),\mathbb{M}(\mbox{\boldmath{$\xi$}}^*))\geq \Phi(\mathbb{M}(\tilde{\mbox{\boldmath{$\xi$}}}),\mathbb{M}(\mbox{\boldmath{$\xi$}}^*)), \quad \forall\, \mbox{\boldmath{$\xi$}}, \tilde{\mbox{\boldmath{$\xi$}}} \in \Xi
\end{equation}
and the point $(\mathbf{M}_i(\xi_i^*),\mathbf{M}_i(\xi_i^*))$ is a saddle point of $\Phi_{\mathbf{M}_{i'}(\xi_{i'}^*), i'\neq i}$ in that
\begin{eqnarray}\label{sp2}
\hspace*{-0.2cm}\Phi_{\mathbf{M}_{i'}(\xi_{i'}^*), i'\neq i}(\mathbf{M}_i(\xi_i^*),\mathbf{M}_i(\xi_i))\geq 0&=& \Phi_{\mathbf{M}_{i'}(\xi_{i'}^*), i'\neq i}(\mathbf{M}_i(\xi_i^*),\mathbf{M}_i(\xi_i^*))\nonumber \\
&&\hspace*{-0.6cm}\geq\, \, \, \Phi_{\mathbf{M}_{i'}(\xi_{i'}^*), i'\neq i}(\mathbf{M}_i(\tilde{\xi}_i),\mathbf{M}_i(\xi_i^*)), \quad \forall\, \xi_i, \tilde{\xi}_i \in \Xi_i,
\end{eqnarray}
for all $i=1, \dots, s$.
\end{enumerate}
\end{thm}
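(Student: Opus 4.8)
The plan is to mirror the proof of Theorem~\ref{oc2} almost verbatim, exploiting the fact that, although the designs may now live on arbitrary (possibly infinite) regions, the objects $\mathbf{M}_i(\xi_i)$ are matrices of fixed size, so $\phi$ is a convex differentiable function on the finite-dimensional convex set $\mathcal{M}=\times_{i=1}^s\mathcal{M}_i$. Consequently its directional derivative is linear in the direction, $\Phi(\mathbb{M},\tilde{\mathbb{M}})=\langle\nabla\phi(\mathbb{M}),\tilde{\mathbb{M}}-\mathbb{M}\rangle$ with the trace inner product, and the block structure of $\mathcal{M}$ yields the decomposition $\Phi(\mathbb{M},\tilde{\mathbb{M}})=\sum_{i=1}^s\Phi_{\mathbf{M}_{i'}, i'\neq i}(\mathbf{M}_i,\tilde{\mathbf{M}}_i)$, exactly as the gradient split in Theorem~\ref{oc2}. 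All the linear-algebraic manipulations of that theorem then carry over with $\xi_i$ replaced by $\mathbf{M}_i(\xi_i)$.

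For part~a), I would first note that minimizing $\phi(\mathbb{M}(\mbox{\boldmath{$\xi$}}))$ over $\mbox{\boldmath{$\xi$}}\in\Xi$ is the same as minimizing $\phi(\mathbb{M})$ over $\mathbb{M}\in\mathcal{M}$, since $\mathcal{M}$ is by definition the image of $\Xi$ under $\mbox{\boldmath{$\xi$}}\mapsto\mathbb{M}(\mbox{\boldmath{$\xi$}})$. Convex optimization theory then gives (i)$\Leftrightarrow$$\Phi(\mathbb{M}(\mbox{\boldmath{$\xi$}}^*),\mathbb{M}(\mbox{\boldmath{$\xi$}}))\geq 0$ for all $\mbox{\boldmath{$\xi$}}\in\Xi$, and the decomposition above turns this into (ii). The equivalence (ii)$\Leftrightarrow$(iii) is obtained by the same perturbation argument as in Theorem~\ref{oc2}: if one partial derivative were negative, choosing the corresponding $\xi_{i_1}$ and $\xi_i=\xi_i^*$ for $i\neq i_1$ collapses the sum to a single negative term, using $\Phi_{\mathbf{M}_{i'}(\xi_{i'}^*), i'\neq i}(\mathbf{M}_i(\xi_i^*),\mathbf{M}_i(\xi_i^*))=0$. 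The direction (iii)$\Rightarrow$(iv) is immediate because $\delta_{x_i}\in\Xi_i$.

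The one genuinely new ingredient, and the step I expect to be the crux, is (iv)$\Rightarrow$(iii). Here I would invoke the defining property~\eqref{eq0}, $\mathbf{M}_i(\xi_i)=\int_{\mathcal{X}_i}\mathbf{M}_i(\delta_{x_i})\,\xi_i(\mathrm{d}x_i)$, together with the linearity of the partial directional derivative in its second slot and $\int_{\mathcal{X}_i}\xi_i(\mathrm{d}x_i)=1$, to interchange the gradient inner product with the integral:
\begin{equation*}
\Phi_{\mathbf{M}_{i'}(\xi_{i'}^*), i'\neq i}(\mathbf{M}_i(\xi_i^*),\mathbf{M}_i(\xi_i))=\int_{\mathcal{X}_i}\Phi_{\mathbf{M}_{i'}(\xi_{i'}^*), i'\neq i}(\mathbf{M}_i(\xi_i^*),\mathbf{M}_i(\delta_{x_i}))\,\xi_i(\mathrm{d}x_i).
\end{equation*}
This is the continuous analog of the finite weighted sum used in Theorem~\ref{oc2}, and it is precisely linearity~\eqref{eq0} that removes any need to assume $\mathcal{X}_i$ finite: if the integrand is nonnegative for every $x_i$ by (iv), the integral is nonnegative, giving (iii).

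For part~b), I would apply the same integral representation at $\xi_i=\xi_i^*$. Since the directional derivative in its own direction vanishes, $0=\Phi_{\mathbf{M}_{i'}(\xi_{i'}^*), i'\neq i}(\mathbf{M}_i(\xi_i^*),\mathbf{M}_i(\xi_i^*))=\int_{\mathcal{X}_i}\Phi_{\mathbf{M}_{i'}(\xi_{i'}^*), i'\neq i}(\mathbf{M}_i(\xi_i^*),\mathbf{M}_i(\delta_{x_i}))\,\xi_i^*(\mathrm{d}x_i)$, and the integrand is nonnegative by (iv); hence it must vanish $\xi_i^*$-almost everywhere, in particular at each support point $x_i$ of $\xi_i^*$. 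For part~c) the left-hand equalities of \eqref{sp1} and \eqref{sp2} are the vanishing of the directional derivative in its own direction, the left inequalities are a)(ii) and a)(iii), and the right inequalities follow from convexity exactly as in Theorem~\ref{oc2}, since $\Phi(\mathbb{M}(\tilde{\mbox{\boldmath{$\xi$}}}),\mathbb{M}(\mbox{\boldmath{$\xi$}}^*))\leq\phi(\mathbb{M}(\mbox{\boldmath{$\xi$}}^*))-\phi(\mathbb{M}(\tilde{\mbox{\boldmath{$\xi$}}}))\leq 0$ by optimality of $\mbox{\boldmath{$\xi$}}^*$, with the partial version obtained the same way through~\eqref{pdd}.
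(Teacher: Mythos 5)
Your proposal is correct and follows essentially the same route as the paper's own proof: the trace-gradient decomposition of $\Phi$ into partial directional derivatives for (i)$\Leftrightarrow$(ii), the single-coordinate perturbation for (ii)$\Rightarrow$(iii), the integral representation obtained from the linearity property \eqref{eq0} as the crux of (iv)$\Rightarrow$(iii) and of part b), and the convexity bound $\Phi(\mathbb{M}(\tilde{\mbox{\boldmath{$\xi$}}}),\mathbb{M}(\mbox{\boldmath{$\xi$}}^*))\leq\phi(\mathbb{M}(\mbox{\boldmath{$\xi$}}^*))-\phi(\mathbb{M}(\tilde{\mbox{\boldmath{$\xi$}}}))$ for part c). Your write-up even makes explicit two points the paper leaves implicit (that optimizing over $\Xi$ equals optimizing over $\mathcal{M}$, and the almost-everywhere-vanishing argument in part b)), but these are elaborations, not a different approach.
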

\begin{proof}
\begin{enumerate}
	\item[a)] 
	
\textbf{(i)$\Leftrightarrow$(ii)}: 

We use for the gradients of $\phi$ with respect to $\mathbf{M}_i$ and $\mathbb{M}$ the notation
\begin{equation*}
\nabla_{M_i}\phi=\left(\frac{\partial\phi}{\partial m_{kl}}\right)_{k,l}, \quad \mathbf{M}_i=( m_{kl})_{k,l}
\end{equation*}
and
\begin{equation*}
\nabla_{M}\phi=\left(\frac{\partial\phi}{\partial m_{kl}}\right)_{k,l}, \quad \mathbb{M}=( m_{kl})_{k,l},
\end{equation*}
respectively.

$\mbox{\boldmath{$\xi$}}^*$ minimizes $\phi$ iff $\Phi(\mathbb{M}(\mbox{\boldmath{$\xi$}}^*),\mathbb{M}(\mbox{\boldmath{$\xi$}}))\geq 0$ for all $\mbox{\boldmath{$\xi$}} \in \Xi$. The directional derivative can be computed by formula
\begin{equation}\label{dd2}
\Phi(\mathbb{M},\tilde{\mathbb{M}})=\frac{\partial\phi}{\partial \alpha}\left((1-\alpha)\mathbb{M}+\alpha\tilde{\mathbb{M}}\right)|_{\alpha=0}.
\end{equation}
Then using some matrix differentiation rules (see e.\,g. \cite{seb}, ch.~17) we receive
\begin{eqnarray*}
\Phi(\mathbb{M}(\mbox{\boldmath{$\xi$}}^*),\mathbb{M}(\mbox{\boldmath{$\xi$}}))&=&\textrm{tr}\left(\nabla_{M}\phi(\mathbb{M}(\mbox{\boldmath{$\xi$}}^*))(\mathbb{M}(\mbox{\boldmath{$\xi$}})-\mathbb{M}(\mbox{\boldmath{$\xi$}}^*))^\top\right)\\
&=& \sum_{i=1}^s\textrm{tr}\left(\nabla_{M_i}\phi(\mathbb{M}(\mbox{\boldmath{$\xi$}}^*))(\mathbf{M}_i(\xi_i)-\mathbf{M}_i(\xi_i^*))\right)\\
&=& \sum_{i=1}^s\Phi_{\mathbf{M}_{i'}(\xi_{i'}^*), i'\neq i}(\mathbf{M}_i(\xi_i^*),\mathbf{M}_i(\xi_i)),
\end{eqnarray*}
which implies the equivalence of (i) and (ii).

\textbf{(ii)$\Leftrightarrow$(iii)}: (iii)$\Rightarrow$(ii) Straightforward

(ii)$\Rightarrow$(iii): Let $\exists$ $\tilde{\xi}_{i_1}$ with $\Phi_{\mathbf{M}_{i'}(\xi_{i'}^*), i'\neq i}(\mathbf{M}_{i_1}(\xi_{i_1}^*),\mathbf{M}_{i_1}(\tilde{\xi}_{i_1}))<0$. Then for $\xi_{i_1}=\tilde{\xi}_{i_1}$ 
and $\xi_i=\xi_i^*$, $\forall\, i\neq i_1$, 
we obtain
\begin{eqnarray*}
\sum_{i=1}^s\Phi_{\mathbf{M}_{i'}(\xi_{i'}^*), i'\neq i}(\mathbf{M}_i(\xi_i^*),\mathbf{M}_i(\xi_i))&=& \Phi_{\mathbf{M}_{i'}(\xi_{i'}^*), i'\neq i}(\mathbf{M}_{i_1}(\xi_{i_1}^*),\mathbf{M}_{i_1}(\tilde{\xi}_{i_1}))\\
&&+\sum_{i\in\{1, \dots, s\}\backslash i_1}\Phi_{\mathbf{M}_{i'}(\xi_{i'}^*), i'\neq i}(\mathbf{M}_i(\xi_i^*),\mathbf{M}_i(\xi_i))\\
&=& \Phi_{\mathbf{M}_{i'}(\xi_{i'}^*), i'\neq i}(\mathbf{M}_{i_1}(\xi_{i_1}^*),\mathbf{M}_{i_1}(\tilde{\xi}_{i_1}))<0.
\end{eqnarray*}

\textbf{(iii)$\Leftrightarrow$(iv)}: (iii)$\Rightarrow$(iv) Straightforward

(iv)$\Rightarrow$(iii) The directional derivative of $\phi$ at $\mathbf{M}_i$ in direction of $\tilde{\mathbf{M}}_i$ is linear in the second argument:
\begin{equation*}
\Phi_{\mathbf{M}_{i'}, i'\neq i}(\mathbf{M}_i,\tilde{\mathbf{M}}_i)=\textrm{tr}\left(\nabla_{M_i}\phi(\mathbb{M})(\tilde{\mathbf{M}}_i-\mathbf{M}_i)\right).
\end{equation*}
Then using formula \eqref{eq0} we obtain
\begin{equation}\label{eq1}
\Phi_{\mathbf{M}_{i'}(\xi_{i'}^*), i'\neq i}(\mathbf{M}_i(\xi_i^*),\mathbf{M}_i(\xi_i))=\int_{\mathcal{X}_i}\Phi_{\mathbf{M}_{i'}(\xi_{i'}^*), i'\neq i}(\mathbf{M}_i(\xi_i^*),\mathbf{M}_i(\delta_{x_i}))\xi_i(\textrm{d}x_i)
\end{equation}
for each $\xi_i \in \Xi_i$.

	\item[b)] The result follows from formula \eqref{eq1}, $\Phi_{\mathbf{M}_{i'}(\xi_{i'}^*), i'\neq i}(\mathbf{M}_i(\xi_i^*),\mathbf{M}_i(\delta_{x_i}))\geq 0$, for all $x_i \in \mathcal{X}_i$, and $\Phi_{\mathbf{M}_{i'}(\xi_{i'}^*), i'\neq i}(\mathbf{M}_i(\xi_i^*),\mathbf{M}_i(\xi_i^*))=0$.
	
	\item[c)] The left-hand sides of both \eqref{sp1} and \eqref{sp2} are straightforward. From convexity of $\phi$ and formula \eqref{dd} we obtain 
	\begin{equation*}
\Phi(\mathbb{M}(\tilde{\mbox{\boldmath{$\xi$}}}),\mathbb{M}(\mbox{\boldmath{$\xi$}}^*))\leq \phi(\mathbb{M}(\mbox{\boldmath{$\xi$}}^*))-\phi(\mathbb{M}(\tilde{\mbox{\boldmath{$\xi$}}})), \quad \forall\, \tilde{\mbox{\boldmath{$\xi$}}} \in \Xi,
\end{equation*}
which implies the right-hand side of \eqref{sp1}. Similarly using formula \eqref{pdd} we obtain the right-hand side of \eqref{sp2}.
\end{enumerate}
\end{proof}

\section{Optimal Designs in Multiple-Group RCR Models}

We consider multiple-group RCR models in which $N$ observational units are assigned to $s$ groups: $n_i$ observational units in the $i$-th group, $\sum_{i=1}^sn_i=N$. 
The group sizes and the group allocation of observational units are fixed. 
Experimental designs are assumed to be the same for all observational units within one group (group-design): $m_i$ observations per unit in design points $x_{ih}$, $h=1, \dots, m_i$,  in group $i$. However, for units from different groups experimental designs are in general not the same: $m_{i'}\neq m_{i''}$ and (or) $x_{i'h}\neq x_{i''h}$, $i'\neq i''$.

Note that the experimental settings $x_{i1}, \dots, x_{im_i}$ in group $i$ are not necessarily all distinct (repeated measurements are not excluded).

Note also that observational units (often called individuals in the literature) are usually expected to be people, animals or plants. However, they may also be studies, centers, clinics, plots, etc.

\subsection{Model specification and estimation of unknown parameters}

In multiple-group random coefficient regression models the $h$-th observation of the $j$-th observational unit in the $i$-th group is given by the following $l$-dimensional random column vector
\begin{equation}\label{f2}
\hspace*{-0.25cm}\mathbf{Y}_{ijh}= \mathbf{G}_i(x_{ih})\mbox{\boldmath{$\beta$}}_{ij} + \mbox{\boldmath{$\varepsilon$}}_{ijh},\quad x_{ih} \in \mathcal{X}_i,\quad h=1, \dots, m_i,\quad j=1,\dots, n_i,\quad i=1,\dots, s,
\end{equation}
where 
$\mathbf{G}_i$ denotes a group-specific ($l\times p$) matrix of known regression functions in group $i$ (in particular case $l=1$: $\mathbf{G}_i=\mathbf{f}_i^\top$, where $\mathbf{f}_i$ is a $p$-dimensional column vector of regression functions), experimental settings $x_{ih}$ come from some experimental region $\mathcal{X}_i$,
$\mbox{\boldmath{$\beta $}}_{ij}=(\beta_{ij1}, \dots, \beta_{ijp})^\top$ are unit-specific random parameters with unknown mean $\mbox{\boldmath{$\beta$}}_0$ and given ($p\times p$) covariance matrix $\mathbf{D}_i$, $\mbox{\boldmath{$\varepsilon$}}_{ijh}$ denote column vectors of observational errors with zero mean and non-singular ($l\times l$) covariance matrix $\mathbf{\Sigma}_i$. All observational errors and all random parameters are assumed to be uncorrelated.


For the vector $\mathbf{Y}_{ij}=(\mathbf{Y}_{ij1}^\top, ..., \mathbf{Y}_{ijm_i}^\top)^\top$ of observations at the $j$-th observational unit in the $i$-th group we obtain
\begin{equation}\label{f1}
\mathbf{Y}_{ij}= \mathbf{F}_i\mbox{\boldmath{$\beta$}}_{ij} + \mbox{\boldmath{$\varepsilon$}}_{ij},\quad j=1,\dots, n_i, \quad i=1,\dots, s,
\end{equation}
where $\mathbf{F}_i=(\mathbf{G}^\top_i(x_{i1}), ..., \mathbf{G}^\top_i(x_{im_i}))^\top$ is the design matrix in group $i$ and $\mbox{\boldmath{$\varepsilon$}}_{ij}=(\mbox{\boldmath{$\varepsilon$}}_{ij1}^\top, ..., \mbox{\boldmath{$\varepsilon$}}_{ijm_i}^\top)^\top$. 
Then the vector $\mathbf{Y}_i=(\mathbf{Y}_{i1}^\top, \dots, \mathbf{Y}_{in_i}^\top)^\top$ of all observations in group $i$ is given by
\begin{equation}
\mathbf{Y}_i=\left(\mathbb{I}_{n_i}\otimes \mathbf{F}_i\right)\mbox{\boldmath{$\beta$}}_i+\mbox{\boldmath{$\varepsilon$}}_i,\quad i=1, \dots s,
\end{equation}
where $\mbox{\boldmath{$\beta$}}_i=(\mbox{\boldmath{$\beta$}}_{i1}^\top, \dots, \mbox{\boldmath{$\beta$}}_{in_i}^\top)^\top$, $\mbox{\boldmath{$\varepsilon$}}_i=(\mbox{\boldmath{$\varepsilon$}}_{i1}^\top, \dots, \mbox{\boldmath{$\varepsilon$}}_{in_i}^\top)^\top$,  $\mathbb{I}_{n_i}$ is the $n_i\times n_i$ identity matrix and ''$\otimes$'' denotes the Kronecker product,
and the total vector $\mathbf{Y}=(\mathbf{Y}_1^\top, \dots, \mathbf{Y}_s^\top)^\top$ of all observations in all groups results in
\begin{equation}
\mathbf{Y}=\left(\begin{array}{c}\mathds{1}_{n_1}\otimes \mathbf{F}_1 \\ \dots \\ \mathds{1}_{n_s}\otimes \mathbf{F}_s \end{array}\right)\mbox{\boldmath{$\beta$}}_0+\tilde{\mbox{\boldmath{$\varepsilon$}}}
\end{equation}
with
\begin{equation*}
\tilde{\mbox{\boldmath{$\varepsilon$}}}=\textrm{block-diag}\left(\mathbb{I}_{n_1}\otimes \mathbf{F}_1, \dots, \mathbb{I}_{n_s}\otimes \mathbf{F}_s\right)\mathbf{b}+\mbox{\boldmath{$\varepsilon$}},
\end{equation*}
where $\textrm{block-diag}(\mathbf{A}_1, \dots, \mathbf{A}_s)$ is the block-diagonal matrix with blocks $\mathbf{A}_1, \dots, \mathbf{A}_s$, $\mathbf{b}=\mbox{\boldmath{$\beta$}}-\mathds{1}_{N}\otimes\mbox{\boldmath{$\beta$}}_0$ for $\mbox{\boldmath{$\beta$}}=(\mbox{\boldmath{$\beta$}}_1^\top, \dots, \mbox{\boldmath{$\beta$}}_{s}^\top)^\top$, $\mbox{\boldmath{$\varepsilon$}}=(\mbox{\boldmath{$\varepsilon$}}_1^\top, \dots, \mbox{\boldmath{$\varepsilon$}}_s^\top)^\top$ and $\mathds{1}_{n_i}$ is the column vector of length $n_i$ with all entries equal to $1$.

Using Gauss-Markov theory we obtain the following best linear unbiased estimator for the mean parameters $\mbox{\boldmath{$\beta $}}_0$:
\begin{eqnarray}\label{blue}
\hat{\mbox{\boldmath{$\beta $}}}_0 =\left[\sum_{i=1}^sn_i((\tilde{\mathbf{F}}_i^\top \tilde{\mathbf{F}}_i)^{-1}+\mathbf{D}_i)^{-1}\right]^{-1}\sum_{i=1}^sn_i((\tilde{\mathbf{F}}_i^\top \tilde{\mathbf{F}}_i)^{-1}+\mathbf{D}_i)^{-1}\hat{\mbox{\boldmath{$\beta $}}}_{0,i},
\end{eqnarray}
where
$\hat{\mbox{\boldmath{$\beta $}}}_{0,i}= (\tilde{\mathbf{F}}_i^\top \tilde{\mathbf{F}}_i)^{-1}\tilde{\mathbf{F}}_i^\top \tilde{\bar{\mathbf{Y}}}_i$ is the estimator based only on observations in the  $i$-th group, $\tilde{\mathbf{F}}_i=(\mathbb{I}_{n_i}\otimes \mathbf{\Sigma}_i^{-1/2})\mathbf{F}_i$ and $\tilde{\bar{\mathbf{Y}}}_i=(\mathbb{I}_{n_i}\otimes \mathbf{\Sigma}_i^{-1/2})\bar{\mathbf{Y}}_i$ (for the mean observational vector $\bar{\mathbf{Y}}_i=\frac{1}{n_i}\sum_{j=1}^{n_i}\mathbf{Y}_{ij}$ in the $i$-th group and the symmetric positive definite matrix $\mathbf{\Sigma}_i^{1/2}$ with the property $\mathbf{\Sigma}_i=\mathbf{\Sigma}_i^{1/2}\mathbf{\Sigma}_i^{1/2}$) are the transformed design matrix and the transformed mean observational vector with respect to the covariance structure of observational errors.

Note that BLUE \eqref{blue} exists only if all matrices $\tilde{\mathbf{F}}_i^\top \tilde{\mathbf{F}}_i$ are non-singular. Therefore, we restrict ourselves on the case where design matrices $\mathbf{F}_i$ are of full column rank for all $i$.

The covariance matrix of the best linear unbiased estimator $\hat{\mbox{\boldmath{$\beta $}}}_0$ is given by
\begin{equation}\label{cov}
\textrm{Cov}\left(\hat{\mbox{\boldmath{$\beta $}}}_0\right) = \left[\sum_{i=1}^sn_i((\tilde{\mathbf{F}}_i^\top \tilde{\mathbf{F}}_i)^{-1}+\mathbf{D}_i)^{-1}\right]^{-1}.
\end{equation}

In \cite{fedo} similar results were obtained for the multi-center trials models.

\subsection{Design criteria}

We define an exact design in group $i$ as
\[ \xi_i= \left( \begin{array}{c} 
    x_{i1},\dots , x_{ik_i} \\ 
    m_{i1},\dots , m_{ik_i}
\end{array} \right),\]
where $x_{i1},\dots , x_{ik_i}$ are the (distinct) experimental settings in $\mathcal{X}_i$ with the related numbers of observations $m_{i1},\dots , m_{ik_i}$, $\sum_{h=1}^{k_i}{m_{ih}}=m_i$. For analytical purposes we also introduce approximate designs:
\[ \xi_i= \left( \begin{array}{c} 
    x_{i1},..., x_{ik_i} \\ 
    w_{i1},..., w_{ik_i}
\end{array} \right),\]
where $w_{ih}\geq 0$ denotes the weight of observations at $x_{ih}$, $h=1, \dots k_i$, and $\sum_{h=1}^{k_i}{w_{ih}}=1$.

We will use the following notation for the moment matrix in group $i$:
\begin{equation}\label{info}
\mathbf{M}_i(\xi_i)=\sum_{h=1}^{k_i}w_{ih}\,\tilde{\mathbf{G}}_i(x_{ih})^\top\tilde{\mathbf{G}}_i(x_{ih}),
\end{equation} 
where $\tilde{\mathbf{G}}_i=\mathbf{\Sigma}_i^{-1/2}\mathbf{G}_i$. 
For exact designs we have $w_{ih}=m_{ih}/m_i$ and
\begin{equation*}
\mathbf{M}_i(\xi_i)=\frac{1}{m_i}\tilde{\mathbf{F}}_i^\top\tilde{\mathbf{F}}_i.
\end{equation*}
We will also use the notation $\mathbf{\Delta}_i=m_i\mathbf{D}_i$ for the adjusted dispersion matrix of random effects in group $i$.

Then we extend the definition of the variance-covariance matrix \eqref{cov} with respect to approximate designs: 
\begin{equation}\label{cov1}
\textrm{Cov}_{\xi_1, \dots, \xi_s} = \left[\sum_{i=1}^sn_im_i\left(\mathbf{M}_i(\xi_i)^{-1}+\mathbf{\Delta}_i\right)^{-1}\right]^{-1}.
\end{equation}

Further we focus on the linear (\textit{L}-) and determinant (\textit{D}-) criteria for estimation of the mean parameters $\mbox{\boldmath{$\beta$}}_0$. The linear criterion is defined as
\begin{equation}
\phi_{L} =\mathrm{tr}\,\left[\mathrm{Cov}\left(\mathbf{L} \hat{\mbox{\boldmath{$\beta $}}}_0\right)\right],
\end{equation}  
where $\mathbf{L}$ is some linear transformation of $\mbox{\boldmath{$\beta$}}_0$.
For approximate designs we obtain 
\begin{equation}\label{lcr}
\phi_{L}(\xi_1, \dots, \xi_s)=\mathrm{tr}\left(\left[\sum_{i=1}^sn_im_i\left(\mathbf{M}_i(\xi_i)^{-1}+\mathbf{\Delta}_i\right)^{-1}\right]^{-1}\mathbf{V}\right),
\end{equation}
where $\mathbf{V} = \mathbf{L}^\top\mathbf{L}$.
\begin{bem}
The \textit{A}-, and \textit{c}-criteria for estimation of $\mbox{\boldmath{$\beta$}}_0$ are the particular linear criteria with $\mathbf{V}=\mathbb{I}_p$ and $\mathbf{V}=\mathbf{c}\mathbf{c}^\top$, for a $p$-dimensional real column vector $\mathbf{c}$, respectively.
\end{bem}
If the regression matrices and the experimental regions are the same among all groups: $\mathbf{G}_i=\mathbf{G}$, $\mathcal{X}_i=\mathcal{X}$, $i=1, \dots, s$, the integrated mean squared error (IMSE-) criterion can also be used for multiple group models. We define the IMSE-criterion for estimation of the mean parameters $\mbox{\boldmath{$\beta$}}_0$ as follows:
\begin{equation}\label{imse}
\phi_{IMSE} =\mathrm{tr}\left(\mathrm{E}\left[\int_{\mathcal{X}}\left(\mathbf{G}(x) \hat{\mbox{\boldmath{$\beta $}}}_0\right)\left(\mathbf{G}(x) \hat{\mbox{\boldmath{$\beta $}}}_0\right)^\top\nu(\textrm{d}x)\right]\right),
\end{equation}
where $\nu$ is some suitable measure on the experimental region $\mathcal{X}$, which is typically
chosen to be uniform on $\mathcal{X}$ with $\nu(\mathcal{X})=1$.
IMSE-criterion \eqref{imse} can be recognized as the particular linear criterion with $\mathbf{V} = \int_{\mathcal{X}}\mathbf{G}(x)^\top \mathbf{G}(x)\,\nu(\textrm{d}x)$.

The \textit{D}-criterion is defined as the logarithm of the determinant of the covariance matrix of the estimation, which results in  
\begin{equation}\label{dcr}
\phi_{D}(\xi_1, \dots, \xi_s)=-\mathrm{ln}\,\mathrm{det}\left(\sum_{i=1}^sn_im_i\left(\mathbf{M}_i(\xi_i)^{-1}+\mathbf{\Delta}_i\right)^{-1}\right)
\end{equation}
for approximate designs.

Note that optimal designs depend on the group sizes $n_i$ only via the proportions $n_i/N$, $i=1, \dots,s$, and are, hence, independent of the total number of observational units $N$ itself. This statement is easy to verify by formulas \eqref{lcr} and \eqref{dcr}. 

\subsection{Optimality conditions}

To make use of the equivalence theorems proposed in Section~2 we verify the convexity of the design criteria.
\begin{lem}\label{conv} 
The \textit{L}- and \textit{D}-criteria 
for estimation of the mean parameters $\mbox{\boldmath{$\beta$}}_0$ are convex with respects to $\mathbb{M}(\mbox{\boldmath{$\xi$}})=(\mathbf{M}_1(\xi_1), \dots \mathbf{M}_s(\xi_s))$.
\end{lem}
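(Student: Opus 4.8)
The plan is to exploit the nested structure of the two criteria, writing each as the composition of elementary matrix maps whose behaviour in the Löwner order is classical, so that convexity of $\phi_L$ and $\phi_D$ in $\mathbb{M}$ reduces to one key concavity fact about the per-group contribution. Throughout I would work on the set of positive definite moment matrices, where \eqref{lcr} and \eqref{dcr} are finite (recall $\mathbf{M}_i(\xi_i)$ is positive definite for full column rank designs); the extension to the boundary is by the usual $(-\infty;\infty]$ convention, since convexity is a closed condition preserved under the limit.

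First I would isolate the per-group map $\psi_i(\mathbf{M}_i)=(\mathbf{M}_i^{-1}+\mathbf{\Delta}_i)^{-1}$ and show it is matrix concave, i.e.\ $\psi_i((1-\alpha)\mathbf{M}_i+\alpha\tilde{\mathbf{M}}_i)\succeq(1-\alpha)\psi_i(\mathbf{M}_i)+\alpha\psi_i(\tilde{\mathbf{M}}_i)$ for $\alpha\in[0,1]$. The cleanest route is the Woodbury identity $(\mathbf{M}_i^{-1}+\mathbf{\Delta}_i)^{-1}=\mathbf{M}_i-\mathbf{M}_i(\mathbf{M}_i+\mathbf{\Delta}_i^{-1})^{-1}\mathbf{M}_i$ (valid when $\mathbf{\Delta}_i=m_i\mathbf{D}_i$ is nonsingular), combined with the joint matrix convexity of the quadratic-over-linear map $(\mathbf{A},\mathbf{B})\mapsto\mathbf{B}\mathbf{A}^{-1}\mathbf{B}^\top$ on pairs with $\mathbf{A}$ positive definite. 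Since $\mathbf{M}_i\mapsto(\mathbf{M}_i+\mathbf{\Delta}_i^{-1},\mathbf{M}_i)$ is affine, the subtracted term is matrix convex in $\mathbf{M}_i$, hence its negative is matrix concave, and adding the affine term $\mathbf{M}_i$ leaves $\psi_i$ matrix concave. Equivalently, $\psi_i(\mathbf{M}_i)$ is the parallel sum $\mathbf{M}_i:\mathbf{\Delta}_i^{-1}$, whose concavity in each argument is classical; a singular $\mathbf{\Delta}_i$ is then handled by approximating $\mathbf{\Delta}_i$ by $\mathbf{\Delta}_i+\varepsilon\mathbb{I}$ and letting $\varepsilon\searrow0$. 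This concavity step is the crux and the main obstacle: the two nested inverses in $\psi_i$ are each matrix convex, so the generic composition rules do not deliver concavity, and it is precisely the affine-plus-quadratic-over-linear reformulation that makes it work.

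Next I would assemble the summary matrix $\mathbf{S}(\mathbb{M})=\sum_{i=1}^s n_i m_i\,\psi_i(\mathbf{M}_i)$. Each summand depends on a single block $\mathbf{M}_i$, is matrix concave in that block, and carries the positive weight $n_i m_i>0$; since matrix concavity is preserved under nonnegative combinations and acts blockwise on the product structure $\mathcal{M}=\times_{i=1}^s\mathcal{M}_i$, the map $\mathbb{M}\mapsto\mathbf{S}(\mathbb{M})$ is matrix concave on $\mathcal{M}$.

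Finally I would compose $\mathbf{S}$ with the outer scalar criteria, $\phi_D=-\ln\det\mathbf{S}(\mathbb{M})$ and $\phi_L=\mathrm{tr}(\mathbf{S}(\mathbb{M})^{-1}\mathbf{V})$ with $\mathbf{V}=\mathbf{L}^\top\mathbf{L}\succeq0$. Both outer maps $\mathbf{A}\mapsto-\ln\det\mathbf{A}$ and $\mathbf{A}\mapsto\mathrm{tr}(\mathbf{A}^{-1}\mathbf{V})=\mathrm{tr}(\mathbf{L}\mathbf{A}^{-1}\mathbf{L}^\top)$ are convex and antitone on positive definite matrices: antitonicity follows from $\mathbf{A}\preceq\mathbf{B}\Rightarrow\mathbf{B}^{-1}\preceq\mathbf{A}^{-1}$ together with monotonicity of $\det$ and of the trace, while convexity of $-\ln\det$ is standard and that of $\mathrm{tr}(\mathbf{L}\mathbf{A}^{-1}\mathbf{L}^\top)$ follows from matrix convexity of the inverse under congruence. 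For such an outer map $F$ and the matrix concave $\mathbf{S}$ I would chain
\[
F\!\left(\mathbf{S}((1-\alpha)\mathbb{M}+\alpha\tilde{\mathbb{M}})\right)\le F\!\left((1-\alpha)\mathbf{S}(\mathbb{M})+\alpha\mathbf{S}(\tilde{\mathbb{M}})\right)\le(1-\alpha)F(\mathbf{S}(\mathbb{M}))+\alpha F(\mathbf{S}(\tilde{\mathbb{M}})),
\]
where the first inequality uses concavity of $\mathbf{S}$ and antitonicity of $F$, and the second uses convexity of $F$. This yields joint convexity of $\phi_D$ and $\phi_L$ in $\mathbb{M}$ and completes the proof.
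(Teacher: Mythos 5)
Your proof is correct, and its skeleton is the same as the paper's: establish matrix concavity of the per-group map $\psi_i(\mathbf{M}_i)=(\mathbf{M}_i^{-1}+\mathbf{\Delta}_i)^{-1}$, conclude that $\psi(\mathbb{M})=\sum_{i=1}^s n_im_i\psi_i(\mathbf{M}_i)$ is matrix concave blockwise in $\mathbb{M}$, and compose with $-\ln\det(\cdot)$ and $\mathrm{tr}\left((\cdot)^{-1}\mathbf{V}\right)$, which are convex and non-increasing in the Loewner order; the two-step chaining inequality you write is exactly the paper's final step. The genuine difference lies in how the crux --- concavity of $\psi_i$ --- is handled. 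The paper dispatches it in one sentence (``since the inverse is non-increasing in Loewner ordering, it is easy to verify \dots'') backed by a citation, whereas you actually prove it: via the Woodbury identity $(\mathbf{M}_i^{-1}+\mathbf{\Delta}_i)^{-1}=\mathbf{M}_i-\mathbf{M}_i(\mathbf{M}_i+\mathbf{\Delta}_i^{-1})^{-1}\mathbf{M}_i$ together with the joint matrix convexity of $(\mathbf{A},\mathbf{B})\mapsto \mathbf{B}\mathbf{A}^{-1}\mathbf{B}^\top$, plus an $\varepsilon$-regularization for singular $\mathbf{\Delta}_i$ (a case the paper does not discuss; for $\mathbf{\Delta}_i=\mathbf{0}$ the map is the identity, so concavity is trivial there anyway). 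Your remark that generic composition rules cannot deliver this concavity is exactly right: both nested inverses are matrix convex, and ``antitone convex of convex gives concave'' does not chain through the Loewner order for matrix-valued maps, since matrix convexity of the outer inverse points the wrong way at the last step; this is precisely why the paper's one-line justification is, read literally, a pointer to the literature (the parallel-sum concavity of Anderson--Duffin type) rather than an argument. So the two proofs share the same decomposition and the same outer composition step, but yours makes the key lemma self-contained where the paper delegates it to a reference; the cost is length, the gain is a proof that stands on its own.
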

\begin{proof}
The function $\phi(\mathbf{N})=\mathbf{N}^{-1}$ is matrix-convex for any positive definite matrix $\mathbf{N}$, i.\,e. 
\begin{equation}\label{mc}
\left(\alpha\mathbf{N}_1+(1-\alpha)\mathbf{N}_2\right)^{-1}\leq \alpha\mathbf{N}_1^{-1}+(1-\alpha)\mathbf{N}_2^{-1}
\end{equation}
in Loewner ordering for any $\alpha \in [0,1]$ and all positive definite $\mathbf{N}_1$  and $\mathbf{N}_2$ (see e.\,g. \cite{seb}, ch.~10). 
Since $\phi$ is non-increasing in Loewner ordering, it is easy to verify that $\psi_i(\mathbf{M}_i)=\left(\mathbf{M}_i^{-1}+\mathbf{\Delta}_i\right)^{-1}$ is matrix-concave for any positive definite $\mathbf{M}_i$ (see e.\,g. \cite{ber}, ch.~10). Then $\psi(\mathbf{M}_1, \dots \mathbf{M}_s)=\sum_{i=1}^sn_im_i\psi_i(\mathbf{M}_i)$ is matrix-concave with respects to $\mathbb{M}=(\mathbf{M}_1, \dots \mathbf{M}_s)$. 
The functions $\phi_1(\mathbf{N})=-\mathrm{ln}\,\mathrm{det}(\mathbf{N})$ and $\phi_2(\mathbf{N})=\mathrm{tr}\left(\mathbf{N}^{-1}\mathbf{V}\right)$ are non-increasing in Loewner ordering and convex for any positive definite matrix $\mathbf{N}$ and any positive semi-definite matrix $\mathbf{V}$ as the standard \textit{D}- and \textit{L}-criteria (see e.\,g. \cite{paz}, ch.~4, or \cite{fed2}, ch.~2). Then the functions $\phi_1\circ\psi$ and $\phi_2\circ\psi$ are convex.
\end{proof}
We formulate optimality conditions for criteria \eqref{lcr} and \eqref{dcr} based on the results of Theorem~\ref{oc1}.
\begin{thm}\label{ocl}
Approximate designs $\mbox{\boldmath{$\xi$}}^*=(\xi_1^*,\dots, \xi_s^*)$ are \textit{L}-optimal for estimation of the mean parameters $\mbox{\boldmath{$\beta$}}_0$ iff
\begin{eqnarray}\label{ineq3}
&&\hspace*{-0.5cm}\mathrm{tr}\left\{\tilde{\mathbf{G}}_i(x_i)\left[\mathbf{M}_i(\xi_i^*)^{-1}\left(\mathbf{M}_i(\xi_i^*)^{-1}+\mathbf{\Delta}_i\right)^{-1}\left[\sum_{r=1}^sn_rm_r\left(\mathbf{M}_r(\xi_r^*)^{-1}+\mathbf{\Delta}_r\right)^{-1}\right]^{-1}\mathbf{V}\right.\right.\nonumber \\
&& \left.\left.\cdot\left[\sum_{r=1}^sn_rm_r\left(\mathbf{M}_r(\xi_r^*)^{-1}+\mathbf{\Delta}_r\right)^{-1}\right]^{-1}\left(\mathbf{M}_i(\xi_i^*)^{-1}+\mathbf{\Delta}_i\right)^{-1}\mathbf{M}_i(\xi_i^*)^{-1}\right]\tilde{\mathbf{G}}_i(x_i)^\top\right\}\nonumber \\
&&\hspace*{-0.5cm}\leq
\mathrm{tr}\left\{\mathbf{M}_i(\xi_i^*)^{-1}\left(\mathbf{M}_i(\xi_i^*)^{-1}+\mathbf{\Delta}_i\right)^{-1}\left[\sum_{r=1}^sn_rm_r\left(\mathbf{M}_r(\xi_r^*)^{-1}+\mathbf{\Delta}_r\right)^{-1}\right]^{-1}\mathbf{V}\right. \nonumber \\
&& \left.\cdot\left[\sum_{r=1}^sn_rm_r\left(\mathbf{M}_r(\xi_r^*)^{-1}+\mathbf{\Delta}_r\right)^{-1}\right]^{-1}\left(\mathbf{M}_i(\xi_i^*)^{-1}+\mathbf{\Delta}_i\right)^{-1}\right\}
\end{eqnarray}
for $x_i \in \mathcal{X}_i$, $i=1, \dots, s$.

For support points of $\xi_i^*$ equality holds in \eqref{ineq3}.
\end{thm}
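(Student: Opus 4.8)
The plan is to specialize the general moment-matrix equivalence theorem (Theorem~\ref{oc1}) to the concrete \textit{L}-criterion \eqref{lcr}. Lemma~\ref{conv} already guarantees that $\phi_L$ is convex in $\mathbb{M}(\mbox{\boldmath{$\xi$}})$, and it is differentiable on tuples of positive definite moment matrices (which are available here since the design matrices $\mathbf{F}_i$ are assumed of full column rank), so all hypotheses of Theorem~\ref{oc1} are satisfied. I would therefore invoke the equivalence (i)$\Leftrightarrow$(iv) of part~a): optimality of $\mbox{\boldmath{$\xi$}}^*$ is equivalent to $\Phi_{\mathbf{M}_{i'}(\xi_{i'}^*), i'\neq i}(\mathbf{M}_i(\xi_i^*),\mathbf{M}_i(\delta_{x_i}))\geq 0$ for all $x_i\in\mathcal{X}_i$ and all $i$, while part~b) supplies equality at the support points of $\xi_i^*$. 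The whole task then reduces to evaluating this partial directional derivative explicitly and recognizing it as the difference of the two traces in \eqref{ineq3}.

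For the computation I would use the formula established in the proof of Theorem~\ref{oc1}, namely $\Phi_{\mathbf{M}_{i'}, i'\neq i}(\mathbf{M}_i,\tilde{\mathbf{M}}_i)=\mathrm{tr}(\nabla_{M_i}\phi(\mathbb{M})(\tilde{\mathbf{M}}_i-\mathbf{M}_i))$, so the only genuine work is to differentiate $\phi_L$ with respect to $\mathbf{M}_i$. Writing $\mathbf{C}=\sum_{r=1}^s n_r m_r(\mathbf{M}_r(\xi_r^*)^{-1}+\mathbf{\Delta}_r)^{-1}$ for the inner sum evaluated at the optimum, I would apply the chain rule together with the identity $\mathrm{d}(\mathbf{N}^{-1})=-\mathbf{N}^{-1}(\mathrm{d}\mathbf{N})\mathbf{N}^{-1}$, used once for $\mathbf{C}^{-1}$, once for $(\mathbf{M}_i^{-1}+\mathbf{\Delta}_i)^{-1}$, and once for $\mathbf{M}_i^{-1}$. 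The three inversions produce three sign changes, leaving an overall factor $-n_i m_i$, so the derivative in direction $\tilde{\mathbf{M}}_i-\mathbf{M}_i(\xi_i^*)$ equals $-n_i m_i\,\mathrm{tr}\left(\mathbf{C}^{-1}(\mathbf{M}_i^{-1}+\mathbf{\Delta}_i)^{-1}\mathbf{M}_i^{-1}(\tilde{\mathbf{M}}_i-\mathbf{M}_i)\mathbf{M}_i^{-1}(\mathbf{M}_i^{-1}+\mathbf{\Delta}_i)^{-1}\mathbf{C}^{-1}\mathbf{V}\right)$, with all $\mathbf{M}_i=\mathbf{M}_i(\xi_i^*)$.

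Finally I would substitute $\tilde{\mathbf{M}}_i=\mathbf{M}_i(\delta_{x_i})=\tilde{\mathbf{G}}_i(x_i)^\top\tilde{\mathbf{G}}_i(x_i)$ from \eqref{info}, split the trace over the two summands of $\mathbf{M}_i(\delta_{x_i})-\mathbf{M}_i(\xi_i^*)$, and use the cyclic invariance of the trace. In the $\mathbf{M}_i(\delta_{x_i})$ term, cyclicity moves $\tilde{\mathbf{G}}_i(x_i)$ to the front and $\tilde{\mathbf{G}}_i(x_i)^\top$ to the back, producing the quadratic form on the left of \eqref{ineq3}; in the $\mathbf{M}_i(\xi_i^*)$ term the cancellation $\mathbf{M}_i^{-1}\mathbf{M}_i\mathbf{M}_i^{-1}=\mathbf{M}_i^{-1}$ yields exactly the right-hand side of \eqref{ineq3}. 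Since $n_i m_i>0$, the condition $\Phi\geq 0$ is equivalent to ``left trace $\leq$ right trace,'' which is precisely \eqref{ineq3}, and the support-point equality follows verbatim from Theorem~\ref{oc1}b). I expect the main obstacle to be purely bookkeeping: keeping the three nested matrix differentials in the correct order and tracking the signs so that the factor $-n_i m_i$ flips the inequality into the stated direction, rather than any conceptual difficulty.
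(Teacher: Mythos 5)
Your proposal is correct and follows exactly the paper's route: invoke Lemma~\ref{conv} for convexity, apply parts a) (equivalence of (i) and (iv)) and b) of Theorem~\ref{oc1}, and compute the partial directional derivative of $\phi_L$, which in your write-up coincides precisely with the derivative $\mathbf{\Phi}_{L, \mathbf{M}_{i'}, i'\neq i}$ displayed in the paper's proof, including the factor $-n_im_i$ whose sign flips the inequality. The only difference is that you spell out the chain-rule and trace-cyclicity bookkeeping that the paper leaves implicit.
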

\begin{proof}
We obtain the results using Lemma~\ref{conv} and parts a) (equivalence of (i) and (iv)) and b) of Theorem~\ref{oc1} for the partial directional derivatives
\begin{eqnarray}
\mathbf{\Phi}_{L, \mathbf{M}_{i'}, i'\neq i}(\mathbf{M}_i,\tilde{\mathbf{M}}_{i})&=&-n_im_i\,\mathrm{tr}\left\{\left[\sum_{r=1}^sn_rm_r\left(\mathbf{M}_r^{-1}+\mathbf{\Delta}_r\right)^{-1}\right]^{-1}\left(\mathbf{M}_i^{-1}+\mathbf{\Delta}_i\right)^{-1}\mathbf{M}_i^{-1}(\tilde{\mathbf{M}}_i-\mathbf{M}_i)\right.\nonumber \\
&& \cdot \left.\mathbf{M}_i^{-1}\left(\mathbf{M}_i^{-1}+\mathbf{\Delta}_i\right)^{-1}\left[\sum_{r=1}^sn_rm_r\left(\mathbf{M}_r^{-1}+\mathbf{\Delta}_r\right)^{-1}\right]^{-1}\mathbf{V}\right\}
\end{eqnarray}
for $i=1, \dots, s$.
\end{proof}
\begin{thm}\label{ocd}
Approximate designs $\mbox{\boldmath{$\xi$}}^*=(\xi_1^*,\dots, \xi_s^*)$ are \textit{D}-optimal for estimation of the mean parameters $\mbox{\boldmath{$\beta$}}_0$ iff
\begin{eqnarray}\label{ineq4}
&&\hspace*{-0.5cm}\mathrm{tr}\left\{\tilde{\mathbf{G}}_i(x_i)\left[\mathbf{M}_i(\xi_i^*)^{-1}\left(\mathbf{M}_i(\xi_i^*)^{-1}+\mathbf{\Delta}_i\right)^{-1}\left[\sum_{r=1}^sn_rm_r\left(\mathbf{M}_r(\xi_r^*)^{-1}+\mathbf{\Delta}_r\right)^{-1}\right]^{-1}\right.\right.\nonumber \\
&& \left.\left.\cdot\left(\mathbf{M}_i(\xi_i^*)^{-1}+\mathbf{\Delta}_i\right)^{-1}\mathbf{M}_i(\xi_i^*)^{-1}\right]\tilde{\mathbf{G}}_i(x_i)^\top\right\}\nonumber \\
&&\hspace*{-0.5cm}\leq
\mathrm{tr}\left\{\mathbf{M}_i(\xi_i^*)^{-1}\left(\mathbf{M}_i(\xi_i^*)^{-1}+\mathbf{\Delta}_i\right)^{-1}\left[\sum_{r=1}^sn_rm_r\left(\mathbf{M}_r(\xi_r^*)^{-1}+\mathbf{\Delta}_r\right)^{-1}\right]^{-1}\right. \nonumber \\
&& \left.\cdot\left(\mathbf{M}_i(\xi_i^*)^{-1}+\mathbf{\Delta}_i\right)^{-1}\right\}
\end{eqnarray}
for $x_i \in \mathcal{X}_i$, $i=1, \dots, s$.

For support points of $\xi_i^*$ equality holds in \eqref{ineq4}.
\end{thm}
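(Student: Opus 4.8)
The plan is to read off Theorem~\ref{ocd} as a direct specialization of Theorem~\ref{oc1} applied to the D-criterion \eqref{dcr}. Convexity of $\phi_D$ with respect to $\mathbb{M}(\mbox{\boldmath{$\xi$}})$ is already guaranteed by Lemma~\ref{conv}, so the hypotheses of Theorem~\ref{oc1} are in force. The equivalence of (i) and (iv) in part~a) reduces D-optimality of $\mbox{\boldmath{$\xi$}}^*$ to the condition $\Phi_{\mathbf{M}_{i'}(\xi_{i'}^*),\,i'\neq i}(\mathbf{M}_i(\xi_i^*),\mathbf{M}_i(\delta_{x_i}))\ge 0$ for all $x_i\in\mathcal{X}_i$ and all $i$, and part~b) upgrades this to equality whenever $x_i$ is a support point of $\xi_i^*$. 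Hence the whole task is to compute the partial directional derivative of $\phi_D$ explicitly and to recast the resulting inequality in the form \eqref{ineq4}.

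For the computation I would set $\psi(\mathbb{M})=\sum_{r=1}^s n_r m_r(\mathbf{M}_r^{-1}+\mathbf{\Delta}_r)^{-1}$, so that $\phi_D=-\mathrm{ln}\,\mathrm{det}\,\psi$, and differentiate along the segment from $\mathbf{M}_i$ to $\tilde{\mathbf{M}}_i$ with the remaining arguments held fixed, using formula \eqref{dd2}. The two matrix rules I rely on are $\mathrm{d}(-\mathrm{ln}\,\mathrm{det}\,\psi)=-\mathrm{tr}(\psi^{-1}\,\mathrm{d}\psi)$ and $\mathrm{d}(\mathbf{N}^{-1})=-\mathbf{N}^{-1}(\mathrm{d}\mathbf{N})\mathbf{N}^{-1}$. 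Since $(\mathbf{M}_i^{-1}+\mathbf{\Delta}_i)^{-1}$ depends on $\mathbf{M}_i$ through two nested inverses, the inverse rule is applied twice, giving the directional derivative of $(\mathbf{M}_i^{-1}+\mathbf{\Delta}_i)^{-1}$ in direction $\mathbf{H}_i=\tilde{\mathbf{M}}_i-\mathbf{M}_i$ as $(\mathbf{M}_i^{-1}+\mathbf{\Delta}_i)^{-1}\mathbf{M}_i^{-1}\mathbf{H}_i\mathbf{M}_i^{-1}(\mathbf{M}_i^{-1}+\mathbf{\Delta}_i)^{-1}$. Composing with the outer $-\mathrm{tr}(\psi^{-1}\,\cdot\,)$ yields
\begin{equation*}
\Phi_{\mathbf{M}_{i'}, i'\neq i}(\mathbf{M}_i,\tilde{\mathbf{M}}_i)=-n_im_i\,\mathrm{tr}\left\{\psi^{-1}\left(\mathbf{M}_i^{-1}+\mathbf{\Delta}_i\right)^{-1}\mathbf{M}_i^{-1}(\tilde{\mathbf{M}}_i-\mathbf{M}_i)\mathbf{M}_i^{-1}\left(\mathbf{M}_i^{-1}+\mathbf{\Delta}_i\right)^{-1}\right\},
\end{equation*}
which is the exact analogue of the derivative displayed in the proof of Theorem~\ref{ocl}, the two-sided $\mathbf{V}$-sandwich being replaced by the single factor $\psi^{-1}$ because $\phi_D$ carries only one $\psi^{-1}$.

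It then remains to substitute $\tilde{\mathbf{M}}_i=\mathbf{M}_i(\delta_{x_i})=\tilde{\mathbf{G}}_i(x_i)^\top\tilde{\mathbf{G}}_i(x_i)$ from \eqref{info} and to evaluate at $\mbox{\boldmath{$\xi$}}^*$. Splitting $\tilde{\mathbf{M}}_i-\mathbf{M}_i(\xi_i^*)$ into its two summands, the $\mathbf{M}_i(\delta_{x_i})$-term produces, after a cyclic permutation of the trace that moves $\tilde{\mathbf{G}}_i(x_i)$ to the front and $\tilde{\mathbf{G}}_i(x_i)^\top$ to the back, the left-hand side of \eqref{ineq4}, while the $\mathbf{M}_i(\xi_i^*)$-term collapses through $\mathbf{M}_i^{-1}\mathbf{M}_i\mathbf{M}_i^{-1}=\mathbf{M}_i^{-1}$ and a further cyclic permutation to its right-hand side. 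Because the prefactor $-n_im_i$ is strictly negative, the inequality $\Phi_{\mathbf{M}_{i'}(\xi_{i'}^*),\,i'\neq i}(\mathbf{M}_i(\xi_i^*),\mathbf{M}_i(\delta_{x_i}))\ge 0$ from (iv) reverses direction and becomes precisely \eqref{ineq4}; the equality at support points is inherited verbatim from part~b). The only genuinely delicate step is the nested differentiation of $(\mathbf{M}_i^{-1}+\mathbf{\Delta}_i)^{-1}$, where the order of the non-commuting factors must be tracked carefully; everything afterward is bookkeeping with the cyclic invariance of the trace.
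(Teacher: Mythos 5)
Your proposal is correct and follows exactly the paper's route: invoke Lemma~\ref{conv} for convexity, apply parts~a) (equivalence of (i) and (iv)) and~b) of Theorem~\ref{oc1}, and compute the partial directional derivative $\Phi_{D,\mathbf{M}_{i'},i'\neq i}(\mathbf{M}_i,\tilde{\mathbf{M}}_i)=-n_im_i\,\mathrm{tr}\{\psi^{-1}(\mathbf{M}_i^{-1}+\mathbf{\Delta}_i)^{-1}\mathbf{M}_i^{-1}(\tilde{\mathbf{M}}_i-\mathbf{M}_i)\mathbf{M}_i^{-1}(\mathbf{M}_i^{-1}+\mathbf{\Delta}_i)^{-1}\}$, which is precisely the derivative displayed in the paper's proof. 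Your write-up merely makes explicit the intermediate steps (the nested inverse differentiation, the substitution $\mathbf{M}_i(\delta_{x_i})=\tilde{\mathbf{G}}_i(x_i)^\top\tilde{\mathbf{G}}_i(x_i)$, and the sign flip from the factor $-n_im_i$) that the paper leaves to the reader.
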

\begin{proof}
The optimality condition follows from Lemma~\ref{conv} and Theorem~\ref{oc1} with the partial directional derivatives
\begin{eqnarray}
\mathbf{\Phi}_{D, \mathbf{M}_{i'}, i'\neq i}(\mathbf{M}_i,\tilde{\mathbf{M}}_{i})&=&-n_im_i\,\mathrm{tr}\left\{\left[\sum_{r=1}^sn_rm_r\left(\mathbf{M}_r^{-1}+\mathbf{\Delta}_r\right)^{-1}\right]^{-1}\left(\mathbf{M}_i^{-1}+\mathbf{\Delta}_i\right)^{-1}\right.\nonumber \\
&& \hspace*{3.2cm}\cdot \left.\mathbf{M}_i^{-1}(\tilde{\mathbf{M}}_i-\mathbf{M}_i)\mathbf{M}_i^{-1}\left(\mathbf{M}_i^{-1}+\mathbf{\Delta}_i\right)^{-1}\right\}
\end{eqnarray}
for $i=1, \dots, s$.
\end{proof}

Note that the results of Theorems~\ref{ocl} and \ref{ocd} coincide for $l=1$ and $n_1=n_2=1$ with optimality conditions for group-wise identical designs in \cite{sch}, ch.~8.

\subsection{Particular case}

We consider the particular multiple-group models, where the regression matrices $\mathbf{G}_i$, the numbers $m_i$ of observations per observational unit, the covariance matrices $\mathbf{D}_i$ and $\mathbf{\Sigma}_i$ of random effects and observational errors and the experimental regions $\mathcal{X}_i$ are the same among all groups. For these models we have $m_i=m$, $\mathbf{\Delta}_i=\mathbf{\Delta}$ and $\mathbf{M}_i(\xi)=\mathbf{M}(\xi)$, $i=1, \dots, s$. Then \textit{L}- and \textit{D}-criteria \eqref{lcr} and \eqref{dcr} simplify to
\begin{equation}
\phi_{L}(\xi_1, \dots, \xi_s)=\mathrm{tr}\left(\left[m\sum_{i=1}^sn_i\left(\mathbf{M}(\xi_i)^{-1}+\mathbf{\Delta}\right)^{-1}\right]^{-1}\mathbf{V}\right)
\end{equation}
and 
\begin{equation}
\phi_{D}(\xi_1, \dots, \xi_s)=-\mathrm{ln}\,\mathrm{det}\left(m\sum_{i=1}^sn_i\left(\mathbf{M}(\xi_i)^{-1}+\mathbf{\Delta}\right)^{-1}\right).
\end{equation}
We denote by $\xi_L^*$ an optimal design for the classical linear criterion
\begin{equation}\label{lcr1}
\phi_{L}(\xi)=\mathrm{tr}\left(\mathbf{M}(\xi)^{-1}\mathbf{V}\right)
\end{equation}
and $\xi_{D}^*$ is an optimal design for the \textit{D}-criterion in the single-group model 
\begin{equation}\label{dcr1}
\phi_{D}(\xi)=\mathrm{ln}\,\mathrm{det}\left(\mathbf{M}(\xi)^{-1}+\mathbf{\Delta}\right).
\end{equation}
Then it can be easily verified that the designs $\xi_i^*=\xi_L^*$ and $\xi_i^*=\xi_D^*$, $i=1, \dots, s$, satisfy the optimality conditions in Theorems~\ref{ocl} and \ref{ocd}, respectively (see \cite{fed}, ch.~5, for the optimality condition with respect to \textit{D}-criterion \eqref{dcr1}).
\begin{fol}\label{c1} 
\textit{L}-optimal designs in the fixed effects model are \textit{L}-optimal as group-designs in the multiple-group RCR model in which the numbers
of observations $m_i$, the regression matrices $\mathbf{G}_i$, the covariance matrices of random effects and observational errors $\mathbf{D}_i$ and $\mathbf{\Sigma}_i$ and the experimental regions $\mathcal{X}_i$ are the same for all groups.
\end{fol}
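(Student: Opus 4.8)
The plan is to substitute the candidate $\xi_i^*=\xi_L^*$ for all $i=1,\dots,s$ into the $L$-optimality condition \eqref{ineq3} of Theorem~\ref{ocl} and to show that, under the group-wise identity assumptions of the particular case, this condition collapses exactly to the classical $L$-optimality condition for $\xi_L^*$ associated with criterion \eqref{lcr1}. Since $\xi_L^*$ is by hypothesis $L$-optimal in the fixed effects model, that classical condition holds; hence the candidate satisfies Theorem~\ref{ocl}, which yields the claim. The whole argument is thus a reduction: verify that \eqref{ineq3} degenerates to the one-design inequality when the groups and the group-designs are all identical.

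First I would exploit the simplifications $m_i=m$, $\mathbf{\Delta}_i=\mathbf{\Delta}$, $\tilde{\mathbf{G}}_i=\tilde{\mathbf{G}}$ and, crucially, the fact that taking the same design in every group gives $\mathbf{M}_i(\xi_i^*)=\mathbf{M}(\xi_L^*)=:\mathbf{M}^*$ for all $i$. Writing $\mathbf{A}=(\mathbf{M}^{*-1}+\mathbf{\Delta})^{-1}$, the inner sum becomes
\[
\sum_{r=1}^s n_r m_r\left(\mathbf{M}_r(\xi_r^*)^{-1}+\mathbf{\Delta}_r\right)^{-1}=mN\,\mathbf{A},
\]
so that $\left[\sum_r\dots\right]^{-1}=\tfrac{1}{mN}\mathbf{A}^{-1}$, with $N=\sum_{i=1}^s n_i$. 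Substituting these expressions into both sides of \eqref{ineq3}, every outer factor $(\mathbf{M}^{*-1}+\mathbf{\Delta})^{-1}=\mathbf{A}$ meets a factor $\mathbf{A}^{-1}$ supplied by the inverted sum.

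The main obstacle, and the key computational point, is precisely this cancellation of the adjusted-dispersion factors $\mathbf{A}$ against $\mathbf{A}^{-1}$: it is what removes all dependence on $\mathbf{\Delta}$, and it is available only because the candidate is group-wise identical. After cancellation the left-hand side of \eqref{ineq3} reduces to $\tfrac{1}{(mN)^2}\,\mathrm{tr}\{\tilde{\mathbf{G}}(x)\mathbf{M}^{*-1}\mathbf{V}\mathbf{M}^{*-1}\tilde{\mathbf{G}}(x)^\top\}$ and the right-hand side to $\tfrac{1}{(mN)^2}\,\mathrm{tr}\{\mathbf{M}^{*-1}\mathbf{V}\}$. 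Dividing by the positive constant $(mN)^{-2}$ and using cyclicity of the trace together with $\mathbf{M}(\delta_x)=\tilde{\mathbf{G}}(x)^\top\tilde{\mathbf{G}}(x)$, this is exactly the classical equivalence-theorem inequality for $L$-optimality of $\xi_L^*$ with respect to \eqref{lcr1} (equivalently, the single-design specialization $s=1$ of Theorem~\ref{oc1}). Since $\xi_L^*$ is $L$-optimal in the fixed effects model, the inequality holds for all $x\in\mathcal{X}$, with equality at the support points of $\xi_L^*$; by Theorem~\ref{ocl} the group-designs $\xi_i^*=\xi_L^*$ are therefore $L$-optimal in the multiple-group RCR model.
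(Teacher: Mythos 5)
Your proposal is correct and takes essentially the same route as the paper: the paper's own argument (stated only as ``it can be easily verified'') is precisely to substitute the group-wise identical candidate $\xi_i^*=\xi_L^*$ into the optimality condition \eqref{ineq3} of Theorem~\ref{ocl} and observe that, under the assumptions $m_i=m$, $\mathbf{\Delta}_i=\mathbf{\Delta}$, $\mathbf{M}_i=\mathbf{M}$, it collapses to the classical equivalence-theorem condition for the fixed-effects criterion \eqref{lcr1}. Your explicit cancellation of $\left(\mathbf{M}^{*-1}+\mathbf{\Delta}\right)^{-1}$ against the inverted sum $\left[mN\left(\mathbf{M}^{*-1}+\mathbf{\Delta}\right)^{-1}\right]^{-1}$ is exactly the computation the paper leaves to the reader, and it is carried out correctly.
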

\begin{fol}\label{c2} 
\textit{D}-optimal designs in the single-group RCR model are \textit{D}-optimal as group-designs in the multiple-group RCR model in which the numbers
of observations $m_i$, the regression matrices $\mathbf{G}_i$, the covariance matrices of random effects and observational errors $\mathbf{D}_i$ and $\mathbf{\Sigma}_i$ and the experimental regions $\mathcal{X}_i$ are the same for all groups.
\end{fol}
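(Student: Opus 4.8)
The plan is to verify that the group-wise identical design $\mbox{\boldmath{$\xi$}}^*=(\xi_D^*,\dots,\xi_D^*)$ fulfils the equivalence condition \eqref{ineq4} of Theorem~\ref{ocd}; by the iff-characterization proved there, this is exactly what is needed for $D$-optimality in the multiple-group model. First I would exploit the symmetry of the particular case: since $m_i=m$, $\mathbf{\Delta}_i=\mathbf{\Delta}$, $\tilde{\mathbf{G}}_i=\tilde{\mathbf{G}}$ and $\mathcal{X}_i=\mathcal{X}$ hold for all groups, and since we place the same design $\xi_D^*$ in every group, all moment matrices coincide, $\mathbf{M}_i(\xi_i^*)=\mathbf{M}(\xi_D^*)=:\mathbf{M}^*$. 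Consequently the central sum reduces to a scalar multiple of a single matrix,
\begin{equation*}
\sum_{r=1}^s n_r m_r\left((\mathbf{M}^*)^{-1}+\mathbf{\Delta}\right)^{-1}=mN\left((\mathbf{M}^*)^{-1}+\mathbf{\Delta}\right)^{-1},\qquad N=\sum_{i=1}^s n_i,
\end{equation*}
so its inverse is simply $\frac{1}{mN}\left((\mathbf{M}^*)^{-1}+\mathbf{\Delta}\right)$.

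Next I would substitute this inverse into both sides of \eqref{ineq4}. The key point is that the factor $\left((\mathbf{M}^*)^{-1}+\mathbf{\Delta}\right)$ coming from the inverted sum cancels the two neighbouring inverse factors $\left((\mathbf{M}^*)^{-1}+\mathbf{\Delta}\right)^{-1}$. On the left-hand side the inner block collapses to $\frac{1}{mN}(\mathbf{M}^*)^{-1}\left((\mathbf{M}^*)^{-1}+\mathbf{\Delta}\right)^{-1}(\mathbf{M}^*)^{-1}$, and on the right-hand side to $\frac{1}{mN}(\mathbf{M}^*)^{-1}\left((\mathbf{M}^*)^{-1}+\mathbf{\Delta}\right)^{-1}$. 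Cancelling the common scalar $\frac{1}{mN}$, condition \eqref{ineq4} turns into
\begin{equation*}
\mathrm{tr}\left(\tilde{\mathbf{G}}(x)(\mathbf{M}^*)^{-1}\left((\mathbf{M}^*)^{-1}+\mathbf{\Delta}\right)^{-1}(\mathbf{M}^*)^{-1}\tilde{\mathbf{G}}(x)^\top\right)\leq\mathrm{tr}\left((\mathbf{M}^*)^{-1}\left((\mathbf{M}^*)^{-1}+\mathbf{\Delta}\right)^{-1}\right),
\end{equation*}
required for all $x\in\mathcal{X}$.

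Finally I would identify this last inequality with the classical single-group $D$-optimality condition for criterion \eqref{dcr1}. Differentiating $\phi_D(\xi)=\ln\det(\mathbf{M}(\xi)^{-1}+\mathbf{\Delta})$ in the direction of a one-point design $\delta_x$, whose moment matrix is $\tilde{\mathbf{G}}(x)^\top\tilde{\mathbf{G}}(x)$, and applying Theorem~\ref{oc1} in the single-group case $s=1$ (equivalently \cite{fed}, ch.~5), the optimality condition for $\xi_D^*$ reads $\mathrm{tr}\left((\mathbf{M}^*)^{-1}\left((\mathbf{M}^*)^{-1}+\mathbf{\Delta}\right)^{-1}(\mathbf{M}^*)^{-1}\tilde{\mathbf{G}}(x)^\top\tilde{\mathbf{G}}(x)\right)\leq\mathrm{tr}\left((\mathbf{M}^*)^{-1}\left((\mathbf{M}^*)^{-1}+\mathbf{\Delta}\right)^{-1}\right)$, which by cyclic invariance of the trace is precisely the reduced form of \eqref{ineq4} displayed above. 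Since $\xi_D^*$ is $D$-optimal in the single-group model, it satisfies this condition for every $x\in\mathcal{X}$, hence $\mbox{\boldmath{$\xi$}}^*$ satisfies \eqref{ineq4}, and Theorem~\ref{ocd} delivers the claim. The main obstacle I expect is nothing conceptual but the bookkeeping in the middle step: one must confirm that the matrix $\left((\mathbf{M}^*)^{-1}+\mathbf{\Delta}\right)$ supplied by the inverted sum exactly annihilates the two flanking inverse factors, leaving no residual group-dependent term. This telescoping, which hinges on all group moment matrices being equal, is exactly what upgrades the single-group condition from merely necessary to sufficient for the compound problem.
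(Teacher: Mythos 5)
Your proposal is correct and follows essentially the same route as the paper: the paper's own justification is precisely the remark that the group-wise identical design $\xi_i^*=\xi_D^*$ "can be easily verified" to satisfy the optimality condition \eqref{ineq4} of Theorem~\ref{ocd}, reducing it via the collapsed sum to the single-group condition for criterion \eqref{dcr1} (with reference to Fedorov, ch.~5). Your telescoping computation and the identification with the single-group condition via cyclic invariance of the trace are exactly the omitted verification, carried out correctly.
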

The latter statements are expected results in that all observational units in all groups have same statistical properties and there are no group-specific restrictions on designs. Note that the group sizes have no influence on the designs in this case. 
However, as we will see in Section~4 even for statistically identical observational units optimal designs may depend on the group sizes if the numbers of observations per unit differ from group to group. 

\section{Example: Straight Line Regression}

We consider the two-groups model of general form \eqref{f2} with the regression functions $\mathbf{G}_i(x)=(1,x)^\top, x\in\mathcal{X}_i$, and the design region $\mathcal{X}_i=[0,1]$, $i=1,2$:
\begin{equation}\label{lr}
Y_{ijh}= \mbox{\boldmath{$\beta$}}_{ij1} + \mbox{\boldmath{$\beta$}}_{ij2}x_{ih}+\mbox{\boldmath{$\varepsilon$}}_{ijh},\quad h=1,\dots, m_i,\quad j=1,\dots, n_i.
\end{equation}
The covariance structures of random effects and observational errors are given by $\mathbf{D}_i=\mathrm{diag}(d_{i1},d_{i2})$ and $\Sigma_i=1$ for both groups. Group sizes $n_i$ and numbers observations per unit $m_i$ are in general not the same for the first and the second group.

The left-hand sides of optimality conditions \eqref{ineq3} and \eqref{ineq4} result in parabolas with positive leading terms for model \eqref{lr}.
Then \textit{L}- and \textit{D}-optimal approximate designs have the form
\begin{equation}
\xi_i=\left(\begin{array}{cc}0 & 1 \\ 1-w_{i} & w_{i}\end{array}\right),
\end{equation}
where $w_i$ denotes the weight of observations in point $1$ for the $i$-th group, and the moment matrices are given by
\begin{equation}
\mathbf{M}_i=\left(\begin{array}{cc} 1 & w_{i} \\ w_{i} & w_{i}\end{array}\right).
\end{equation}

\subsection{Random intercept}

We consider first the particular case of model \eqref{lr} in which only the intercept $\mbox{\boldmath{$\beta$}}_{ij1}$ is random, i.\,e. $d_{i2}=0$. We focus on the \textit{A}- and \textit{D}-criteria, which are given by \eqref{lcr} for $\mathbf{V}=\mathbb{I}_2$ and \eqref{dcr}, respectively. The \textit{D}-criterion for the random intercept model is given by
\begin{eqnarray}
\phi_{D}(w_1, w_2)=-\mathrm{ln}\left(\sum_{i=1}^2\frac{n_im_i}{d_{i1}m_i+1}\sum_{i=1}^2\frac{n_im_iw_i(d_{i1}m_i(1-w_i)+1)}{d_{i1}m_i+1}-\left(\sum_{i=1}^2\frac{n_im_iw_i}{d_{i1}m_i+1}\right)^2\right).
\end{eqnarray}
This function achieves for all values of $n_i$ and $m_i$ its minimum at point $w_1^*=w_2^*=0.5$, which coincides with the optimal design in the fixed effects model and in the single-group random intercept model (see \cite{schw2}). 
The \textit{A}-criterion for the random intercept model is given by 
\begin{eqnarray}
\phi_{A}(w_1, w_2)=\frac{\sum_{i=1}^2\frac{n_im_i(d_{i1}m_iw_i(1-w_i)+1+w_i)}{d_{i1}m_i+1}}{\sum_{i=1}^2\frac{n_im_i}{d_{i1}m_i+1}\sum_{i=1}^2\frac{n_im_iw_i(d_{i1}m_i(1-w_i)+1)}{d_{i1}m_i+1}-\left(\sum_{i=1}^2\frac{n_im_iw_i}{d_{i1}m_i+1}\right)^2}.
\end{eqnarray}
In contrast to the \textit{D}-criterion, the \textit{A}-optimal weights in general depend on the group sizes $n_i$ and the numbers of observations $m_i$. Some numerical results for $d_{i1}=1$, $i=1,2$, are presented in Table~1.
\begin{table}
\caption{\textit{A}-optimal designs in random intercept model in dependence on group sizes $n_i$ and numbers of observations $m_i$ for $d_{i1}=1$, $i=1,2$}
\begin{center}
\begin{tabular}{|c|c|c|c|c|c|c|c|c|c|c|}
\hline
Case no. & $n_1$ & $n_2$ & $m_1$ & $m_2$ & $m_1/m_2$ & $w^*_{1}$ & $1-w^*_{1}$ & $w^*_{2}$ & $1-w^*_{2}$ \\
\hline
1 & 1 & 1 & 2 & 8 & 1/4 &  0.298 & 0.702 & 0.450 & 0.550 \\
2 & 1 & 1 & 5 & 5 & 1 &  0.414 & 0.586 & 0.414 & 0.586 \\
3 & 1 & 1 & 8 & 2 & 4 &  0.450 & 0.550 & 0.298 & 0.702 \\
\hline
4 & 1 & 1 & 4 & 16 & 1/4 &  0.300 & 0.700 & 0.450 & 0.550 \\
5 & 1 & 1 & 10 & 10 & 1 &  0.414 & 0.586 & 0.414 & 0.586 \\
6 & 1 & 1 & 16 & 4 & 4 &  0.450 & 0.550 & 0.300 & 0.700 \\
\hline
7 & 1 & 2 & 2 & 8 & 1/4 &  0.256 & 0.744 & 0.439 & 0.561 \\
8 & 1 & 2 & 5 & 5 & 1 &  0.414 & 0.586 & 0.414 & 0.586 \\
9 & 1 & 2 & 8 & 2 & 4 &  0.460 & 0.540 & 0.338 & 0.662  \\
\hline
10 & 1 & 2 & 4 & 16 & 1/4 &  0.258 & 0.742 & 0.439 & 0.561 \\
11 & 1 & 2 & 10 & 10 & 1 &  0.414 & 0.586 & 0.414 & 0.586 \\
12 & 1 & 2 & 16 & 4 & 4 &  0.460 & 0.540 & 0.339 & 0.661 \\
\hline
\end{tabular}
\end{center}
\end{table}
As we can see in the table, if the numbers of observations $m_i$ are the same for both groups: cases 2, 5, 8 and 11, the optimal weight $w_A^*=0.414$ in the fixed effects model retains its optimality for the multiple-group model, which is in accordance with Corollary~\ref{c1}. In these cases optimal designs are independent of the group sizes $n_i$ and the numbers of observations $m_i$. For all other cases the optimal weight $w_i^*$ is smaller (larger) than $w_A^*$ if the number of observations $m_i$ is smaller (larger) than the mean number of observations $(m_1+m_2)/2$. For same group sizes ($n_1=n_2$) optimal designs ''swap places'' if the numbers of observations ''swap places'': the optimal weight $w_1^*$ in case 1 is the same as the optimal weight $w_2^*$ in case 3 (same for cases 4 and 6). This property, however, does not hold for different group sizes (cases 7 and 9 or 10 and 12).  

\subsection{Random slope}

Now we consider the particular case of straight line regression model \eqref{lr} in which only the slope $\mbox{\boldmath{$\beta$}}_{ij2}$ is random: $d_{i1}=0$. For this model the \textit{D}-criterion is given by
\begin{eqnarray}
\phi_{D}(w_1, w_2)=-\mathrm{ln}\left(\sum_{i=1}^2\frac{n_im_iw_i}{d_{i1}m_iw_i+1}\sum_{i=1}^2n_im_i(1-w_i)\right).
\end{eqnarray}
In contrast to the random intercept model, \textit{D}-optimal designs for the random slope depend on the group sizes and the numbers of observations. Numerical results for $d_{i2}=1$, $i=1,2$, are presented in Table~2. 
\begin{table}
\caption{\textit{D}-optimal designs in random slope model in dependence on group sizes $n_i$ and numbers of observations $m_i$ for $d_{i2}=1$, $i=1,2$}
\begin{center}
\begin{tabular}{|c|c|c|c|c|c|c|c|c|c|c|c|c|c|c|}
\hline
Case no. & $n_1$ & $n_2$ & $m_1$ & $m_2$ & $m_1/m_2$ & $w^*_{1}$ & $1-w^*_{1}$ & $w^*_{2}$ & $1-w^*_{2}$ \\
\hline
1 & 1 & 1 & 2 & 8 & 1/4 & 0.725 & 0.275 & 0.181 & 0.819 \\
2 & 1 & 1 & 5 & 5 & 1 &  0.290 & 0.710 & 0.290 & 0.710 \\
3 & 1 & 1 & 8 & 2 & 4 & 0.181 & 0.819 & 0.725 & 0.275 \\
\hline
4 & 1 & 1 & 4 & 16 & 1/4 & 0.579 & 0.421 & 0.145 & 0.855 \\
5 & 1 & 1 & 10 & 10 & 1 & 0.232 & 0.768 & 0.232 & 0.768 \\
6 & 1 & 1 & 16 & 4 & 4 & 0.145 & 0.855 & 0.579 & 0.421 \\
\hline
7 & 1 & 2 & 2 & 8 & 1/4 & 0.823 & 0.177 & 0.206 & 0.794 \\
8 & 1 & 2 & 5 & 5 & 1 & 0.290 & 0.710 & 0.290 & 0.710 \\
9 & 1 & 2 & 8 & 2 & 4 & 0.155 & 0.845 & 0.618 & 0.382 \\
\hline
10 & 1 & 2 & 4 & 16 & 1/4 & 0.651 & 0.349 & 0.163 & 0.837 \\
11 & 1 & 2 & 10 & 10 & 1 & 0.232 & 0.768 & 0.232 & 0.768 \\
12 & 1 & 2 & 16 & 4 & 4 & 0.125 & 0.875 & 0.500 & 0.500 \\
\hline
\end{tabular}
\end{center}
\end{table}
As we can see in the table, if $m_1=m_2$ optimal designs are the same for both groups (cases 2, 5, 8 and 11). However, they depend on the numbers of observations $m_i$ themselves (optimal weights in cases 2 and 8 differ from those in cases 5 and 11). This phenomenon is in accordance with Corollary~\ref{c2} since the \textit{D}-optimal weight $w_D^*$ in the single-group model (which minimizes criterion \eqref{dcr1}) also depends on the number of observations (via matrix $\mathbf{\Delta}$). In contrast to the random intercept model, for the random slope the optimal weight $w_i^*$ is larger (smaller) than $w_D^*$ 
if $m_i$ is smaller (larger) than $(m_1+m_2)/2$.

The minimization of the \textit{A}-criterion for the random slope model leads (with the only exception in case $m_1=m_2$) to a singular design matrix. Therefore, this criterion will not be considered here.

\section{Discussion}

In this work we considered design problems depending on several designs simultaneously. We proposed equivalence theorems based on the assumptions of convexity and differentiability of the optimality criteria. For design problems with finite experimental regions we formulated optimality conditions with respect to the designs themselves (Theorem~1). If the optimality criteria depend on the designs via moment matrices only, optimality conditions are formulated with respect to the moment matrices (Theorem~2).

We applied the proposed optimality conditions to the multiple-group RCR models. If all observational units have the same statistical properties and there are no group-specific design-restrictions, optimal designs in the single-group models are also optimal as group-designs in the multiple-group models. In this case the group sizes have no influence on the designs. However, if the numbers of observations differ from group to group, optimal group-designs may depend on the numbers of observations and the group sizes. This behavior has been illustrated by the example of straight line regression models.

The proposed results solve the problem of optimal designs for the estimation of fixed effects in the multiple-group RCR models. The problem of prediction of random parameters remains, however, open. Also for the computation of optimal approximate and exact designs some new approach has to be developed. Moreover, we assumed fixed numbers of observational units $n_i$ and observations per unit $m_i$. Design optimization with respect to these numbers may be an interesting direction for a future research.

\section*{Acknowledgment}

This research was supported by grant SCHW 531/16 of the German Research Foundation (DFG).
The author is grateful to Norbert Gaffke for fruitful discussions on optimality conditions in Theorem~\ref{oc1} and convexity in Lemma~\ref{conv}. Also discussions with Rainer Schwabe on application in mixed models have been very helpful.

\appendix


\bibliographystyle{natbib}
\bibliography{prus7}

\end{document}